\documentclass[11pt, twoside]{article}

\usepackage[english]{babel}

\usepackage{amssymb}
\usepackage{mathrsfs}
\usepackage{amsmath}
\usepackage{amsthm}
\usepackage{amsfonts}
\usepackage{latexsym}
\usepackage{indentfirst}
\usepackage{color}
\usepackage{txfonts}
\usepackage{enumerate}

\usepackage[colorlinks=true,
linkcolor=blue,
citecolor=red,
urlcolor=magenta,
]{hyperref}

\usepackage{txfonts}
\usepackage{anysize}

\allowdisplaybreaks

\newtheorem{theorem}{Theorem}[section]
\newtheorem{lemma}[theorem]{Lemma}

\newtheorem{proposition}[theorem]{Proposition}

\theoremstyle{definition}

\newcounter{assum}

\renewcommand{\appendix}{\par
\setcounter{section}{0}%
\setcounter{subsection}{0}%
\setcounter{subsubsection}{0}%
\gdef\thesection{\@Alph\c@section}%
\gdef\thesubsection{\@Alph\c@section.\@arabic\c@subsection}%
\gdef\theHsection{\@Alph\c@section.}%
\gdef\theHsubsection{\@Alph\c@section.\@arabic\c@subsection}%
\csname appendixmore\endcsname
}

\numberwithin{equation}{section}

\begin{document}
\title{\bf\Large
A Counterexample to the Necessity of the Vanishing Carleson 
Condition for VMO Poisson Kernels\footnotetext{\hspace{-0.35cm} 2020 {\it
Mathematics Subject Classification}.
Primary 35J25; Secondary 35B20, 31B05, 42B37.
\endgraf {\it Key words and phrases.}
elliptic operator, Dirichlet problem, Poisson kernel, elliptic measure,
vanishing Carleson condition,
VMO space.
\endgraf This project is partially supported by the National
Natural Science Foundation of China (Grant Nos. 12431006, 12371093, and 12501118),
the Beijing Natural Science Foundation (Grant No. 1262011),
the Fundamental Research Funds for the Central Universities
(Grant No. 2253200028), and Longyuan Young Talents of Gansu Province.
}}
\author{Xiaosheng Lin, Dachun Yang\footnote{Corresponding
author, E-mail: \texttt{dcyang@bnu.edu.cn}/{\color{red}\today}/Final version.},
\ \ Sibei Yang, Wen Yuan
and Yangyang Zhang}
\date{}
\maketitle

\vspace{-0.7cm}

\begin{center}
\begin{minipage}{13cm}
{\small {\bf Abstract.}\quad
In [Problem 3.2.23, CBMS Regional Conference Series in Mathematics 83, 1994],
Kenig asked whether the vanishing Carleson condition is the
necessary and sufficient for the logarithm
of the Poisson kernel of a perturbation of the Laplacian on the unit ball in
$\mathbb{R}^n$ belonging to the space VMO. The sufficiency was proved by
Escauriaza [Israel J. Math. 1996] and extended by Milakis, Pipher, and Toro
[Contemp. Math. 2014] to more general setting. In this article,
using the technique of bi-Lipschitz mappings, we construct a counterexample
to show that the vanishing Carleson condition is not necessary and hence give a
negative answer to the aforementioned problem.}
\end{minipage}
\end{center}

\vspace{0.1cm}

\tableofcontents
\section{Introduction}

In the study of second-order elliptic partial differential equations 
in divergence form, a key topic is
the boundary regularity of solutions and the associated elliptic measure (see, for example,
\cite{ahmmt20,amt17,btz23a,cfk81,d77,fkp91,Kenig1994,t2010}). Let $n\ge2$,
$\Omega\subset\mathbb{R}^n$ be a domain, and
$L:=\operatorname{div}(A\nabla\cdot)$ be a uniformly elliptic
operator on $\Omega$ with real, bounded, measurable coefficients.
For each $x\in\Omega$, the \emph{elliptic measure $\omega_L^x$}
(with pole at $x$) is a family of probability measures on
the boundary $\partial\Omega$ such that,
for any $f\in C(\partial\Omega)$ (the set of all continuous functions
on $\partial\Omega$), the solution $u$ to the Dirichlet problem
\begin{align}\label{eq1.1}
\begin{cases}
Lu=0&\ \text{in}\ \ \Omega,\\
u=f&\ \text{on}\ \ \partial\Omega
\end{cases}
\end{align}
is given by
$$u(x)=\int_{\partial\Omega} f\,d\omega_L^x\quad \mathrm{for\ all}\quad x\in\Omega
$$
(see, for example, \cite{d77,fkp91,Kenig1994}).

A classical result due to Dahlberg \cite{d77} states that, for the Laplacian
on a bounded Lipschitz domain, the elliptic measure is absolutely continuous with respect
to the surface measure $\sigma$ and its Radon--Nikodym derivative, also called
the Poisson kernel, satisfies a reverse H\"older inequality.
Later, Jerison and Kenig \cite{jk82} proved that, in the case of $C^1$ domains, the
logarithm of the Poisson kernel belongs to the space of vanishing mean
oscillation (for short, the space VMO; see, for example, \cite{s75}). For domains with lower regularity
such as chord-arc domains, the analogous conclusion was established by Kenig and
Toro \cite{kt97} for the Laplacian, provided the domain is a \emph{vanishing chord-arc domain}
(that is, its boundary is asymptotically flat and the surface
measure is asymptotically the Lebesgue measure).

In the variable coefficient setting, the situation is more delicate. Caffarelli,
Fabes, and Kenig \cite{cfk81} constructed examples of operators with continuous coefficients
for which the elliptic measure is singular with respect to the surface measure, and hence
the $L^p$ Dirichlet problem for the problem \eqref{eq1.1} cannot be solved for any $p\in(1,\infty)$.
These examples arise from quasiconformal mappings and show that absolute continuity
of elliptic measures with respect to surface measures is not automatic.

Nevertheless, a robust perturbation theory has been developed by Dahlberg \cite{d86} via
introducing the \emph{Carleson measure condition} on the difference of coefficient
matrices. Precisely, let $n\ge2$ and $\Omega\subset\mathbb{R}^n$ be a bounded Lipschitz domain.
Denote the surface measure on $\partial\Omega$ by $\sigma.$ For a uniformly elliptic
operator $L$ in divergence form on $\Omega$, denote by $k_L$ its Poisson kernel
on $\Omega$, namely $k_L=d\omega_L/d\sigma$. For $j\in\{0,1\}$, let
\begin{align*}
L_j:=\operatorname{div}(A_j\nabla\cdot)
\end{align*}
be two real, symmetric, uniformly elliptic operators in divergence form on $\Omega$.
For any $x\in \Omega$, define
\begin{align*}
a(x):=\sup\{|A_1(z)-A_0(z)|:z\in B(x,\delta(x)/2)\},
\end{align*}
where $\delta(x):=\operatorname{dist\,}(x,\partial \Omega):=\inf\{|x-y|:y\in\partial\Omega\}$. For any $Q\in\partial\Omega$
and $r\in(0,\mathrm{diam\,}(\Omega))$, let
\begin{align*}
h(r,Q):=\left\{\frac{1}{\sigma(\Delta(Q,r))}\int_{T(Q,r)}\frac{[a(x)]^2}
{\delta(x)}\,dx\right\}^{\frac12},
\end{align*}
where
$$\mathrm{diam\,}(\Omega):=\sup\{|x-y|: x,y\in\Omega\},$$
$\Delta(Q,r):=B(Q,r)
\cap\partial\Omega$ with $B(Q,r):=\{x\in\mathbb{R}^n: |x-Q|<r\}$, and $T(Q,r):=B(Q,r)\cap\Omega$.
Dahlberg \cite{d86} proved that, if the Carleson norm
$$\sup_{r\in(0,\mathrm{diam\,}(\Omega))}\sup_{Q\in\partial\Omega}h(r,Q)$$
is sufficiently small, then the reverse H\"older inequality properties of
elliptic measures $\{\omega_{L_0},\omega_{L_1}\}$ are preserved.
That is, if $k_{L_0}$ satisfies the reverse H\"older inequality of order $q$ with some $q\in(1,\infty)$,
then $k_{L_1}$ also satisfies the reverse H\"older inequality of order $q$.
Furthermore, Fefferman, Kenig, and Pipher \cite{fkp91} showed that, under the Carleson measure condition
$$\sup_{r\in(0,\mathrm{diam\,}(\Omega))}\sup_{Q\in\partial\Omega}h(r,Q)<\infty,$$
the $A_\infty$ weight property of the elliptic measure is also preserved.

Moreover, if $g$ is integrable on a surface ball
$\Delta\subset\partial\Omega$ with $\sigma(\Delta)>0$,
define its \emph{average} on $\Delta$ by setting
$$
g_{\Delta}:=\frac{1}{\sigma(\Delta)}\int_{\Delta}g\,d\sigma.
$$
Then the \emph{space $\operatorname{BMO}(\partial\Omega,\sigma)$} is defined 
as the set of all integrable functions $g$ on $\partial\Omega$ such that 
\begin{align*}
\|g\|_{\operatorname{BMO}(\partial\Omega,\sigma)}:=\sup_{Q\in\partial\Omega}
\sup_{r\in(0,\mathrm{diam\,}(\Omega))}
\frac{1}{\sigma(\Delta(Q,r))}
\int_{\Delta(Q,r)}|g-g_{\Delta(Q,r)}|\,d\sigma<\infty
\end{align*}
(see, for example, \cite{jn61}), and the \emph{space 
$\operatorname{VMO}(\partial\Omega,\sigma)$} is defined as the set
of all $g\in\operatorname{BMO}(\partial\Omega,\sigma)$ such that 
\begin{align*}
\lim_{\rho\to 0^+}
\sup_{Q\in\partial\Omega}\sup_{r\in(0,\rho]}
\frac{1}{\sigma(\Delta(Q,r))}
\int_{\Delta(Q,r)}|g-g_{\Delta(Q,r)}|\,d\sigma=0
\end{align*}
(see, for example, \cite{s75}). Here, and thereafter,
the notation $\rho\to 0^+$ means that $\rho\in (0,\infty)$
and $\rho\to 0$. For the finer VMO regularity of $\log k_L$,
Escauriaza \cite{Escauriaza1996} proved that, under the
vanishing Carleson perturbation condition, i.e.,
$$\lim_{r\to 0^+}\sup_{Q\in\partial\Omega}h(r,Q)=0,$$
if $\log k_{L_0}\in\mathrm{VMO}(\partial\Omega,\sigma)$, then $\log k_{L_1}\in\mathrm{VMO}(\partial\Omega,\sigma)$.
This result was further extended to chord-arc domains by Milakis,
Pipher, and Toro \cite{mpt14}; see also \cite{amt17,btz23,kt03,kt99,tt24,t2010}
for further results.

From now on, let $B:=B({\bf 0},1)$ be the unit ball 
in $\mathbb{R}^n$, where ${\bf 0}$ denotes the \emph{origin} of
$\mathbb{R}^n$. In his monograph \cite[Problem 3.2.23]{Kenig1994},
Kenig posed the following two-part problem.

\paragraph{Forward implication.}
Assume that
\begin{align*}
\varlimsup_{r\to 0^+}\sup_{Q\in\partial B}h(r,Q)=0,
\end{align*}
and that both $\omega_{L_0}^{\mathbf{0}}$ and $\omega_{L_1}^{\mathbf{0}}$ are absolutely continuous with respect to the
surface measure $\sigma$ on $\partial B$ (so that their Poisson kernels $k_{L_0},k_{L_1}$ exist).
If $\log k_{L_0}\in\mathrm{VMO}(\partial B,\sigma)$, then $\log k_{L_1}\in\mathrm{VMO}(\partial B,\sigma)$.

\paragraph{Suggested converse.}
Assume that $L_0:=\Delta$ (the Laplacian). Then, by the mean value property,
one finds that $\omega_\Delta^{\mathbf{0}}=\frac {\sigma}{\sigma(\partial B)}$,
and hence $k_\Delta$ is constant and
$\log k_\Delta\in\mathrm{VMO}(\partial B, \sigma)$. Assume further that
\begin{align*}
\sup_{r\in(0,1)}\sup_{Q\in\partial B}h(r,Q)<\infty
\end{align*}
and $\log k_{L_1}\in\mathrm{VMO}(\partial B, \sigma)$. Then 
\begin{align*}
\varlimsup_{r\to 0^+}\sup_{Q\in\partial B}h(r,Q)=0.
\end{align*}

As pointed out above, the forward implication was proved by Escauriaza \cite{Escauriaza1996}
for Lipschitz domains and later extended to chord-arc domains by Milakis, Pipher and Toro
\cite{mpt14}. Therefore, the first part of \cite[Problem 3.2.23]{Kenig1994} is settled.

The suggested converse, however, remains open. The goal of this article is to answer this question.
This question asks whether the vanishing Carleson condition is
necessary for the VMO property of the logarithm of the Poisson kernel to be preserved when
the reference operator is the Laplacian. If true, it would give 
an optimal characterization: the vanishing
Carleson condition is equivalent to the VMO regularity of
the logarithm of the Poisson kernel for perturbations of the Laplacian.

In this article, by constructing a counterexample,
we show that the suggested converse by Kenig in
\cite[Problem 3.2.23]{Kenig1994} is \emph{false}. Precisely, we
have the following theorem.

\begin{theorem}\label{thm:main-counterexample}
Let $B:=B({\bf 0},1)$ be the unit ball in $\mathbb{R}^n$, and let $L_0:=\Delta$.
Then there exists a real, symmetric, uniformly elliptic $n\times n$
matrix $A$ on $B$ such that, for the associated operator  $L_1:=\operatorname{div}(A\nabla \cdot),$
the following statements hold.
\begin{enumerate}[{\rm(i)}]
\item $\omega_{L_1}^\mathbf{0}=\frac{\sigma}{\sigma(\partial B)}.$
Consequently, $k_{L_1}\equiv \frac{1}{\sigma(\partial B)}$ and
$\log k_{L_1}\in\operatorname{VMO}(\partial B,\sigma)$.
\item
\begin{align}\label{zhugou}
\sup_{r\in(0,1)}\sup_{Q\in\partial B}h(r,Q)<\infty.
\end{align}
\item
\begin{align}\label{xiajie}
\varlimsup_{r\to0^+}\sup_{Q\in\partial B}h(r,Q)>0.
\end{align}
\end{enumerate}
\end{theorem}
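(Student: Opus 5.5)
We construct $A$ by pulling back the Laplacian under a bi-Lipschitz homeomorphism $\Phi:\overline B\to\overline B$. We choose $\Phi$ to fix the origin, to restrict to the identity on $\partial B$, and to be smooth away from the boundary. Set
\[
A(x):=|\det D\Phi(x)|\,[D\Phi(x)]^{-1}[D\Phi(x)]^{-T}.
\]
This is the standard pullback matrix, so that $u$ solves $\operatorname{div}(A\nabla u)=0$ in $B$ exactly when $u\circ\Phi^{-1}$ is harmonic. Since $D\Phi$ and $D\Phi^{-1}$ are bounded, $A$ is real, symmetric, bounded and uniformly elliptic.

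\textbf{Step 1: statement (i).} Take $f\in C(\partial B)$, and let $v$ be the harmonic extension of $f$, which is possible because $\Phi|_{\partial B}=\mathrm{id}$. Then $u:=v\circ\Phi$ solves the $L_1$-Dirichlet problem with data $f$. Using $\Phi(\mathbf 0)=\mathbf 0$ and the mean value property,
\[
u(\mathbf 0)=v(\mathbf 0)=\frac{1}{\sigma(\partial B)}\int_{\partial B}f\,d\sigma .
\]
Hence $\omega_{L_1}^{\mathbf 0}=\sigma/\sigma(\partial B)$, and (i) follows.

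\textbf{Step 2: the construction of $\Phi$.} We want $a(x)=\sup_{B(x,\delta(x)/2)}|A-I|$ to stay bounded below near the boundary on a set of positive Carleson density. A rotation in the angular variables suits this. Work in polar coordinates $x=r\theta$ and set
\[
\Phi(r\theta):=r\,R_{\psi(r)}\theta ,
\]
where $R_t$ is a rotation of $\mathbb R^n$ acting in a fixed 2-plane. We choose the angle function $\psi$ with the following properties:
\begin{enumerate}[(a)]
\item $\psi(r)=0$ for $r\le 1/2$, so $\Phi$ is the identity near the origin;
\item $\psi(1)=0$, so $\Phi$ is the identity on $\partial B$;
\item $\psi'$ is bounded in absolute value by a fixed constant, which keeps $\Phi$ bi-Lipschitz;
\item $\psi(r)=c\sin(\log(1-r))$ near $r=1$, so that $|(1-r)\psi'(r)|$ stays bounded but does not tend to zero.
\end{enumerate}
Property (d) is in tension with (c): with this formula $\psi'\sim(1-r)^{-1}$ is unbounded, and $\psi(r)$ does not tend to $0$ as $r\to1$, so (b) fails too. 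We therefore need to modify the choice.

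\textbf{Step 2, repaired: the main obstacle.} The point of (d) is to keep $a(x)$ bounded below. For $\Phi$ to be bi-Lipschitz, the shear term $r\psi'(r)$ has to be bounded; this does not mean $\psi'$ must vanish at the boundary. So we take $\psi$ Lipschitz with $|\psi'|\approx 1$ and oscillating in sign on every dyadic layer $\{1-r\in[2^{-k-1},2^{-k}]\}$. One choice is a sawtooth of slope $\pm1$ with amplitude about $2^{-k}$ on the $k$-th layer, so that $\psi(r)\to0$ as $r\to1$. Then $\Phi$ extends continuously to $\partial B$ as the identity and is bi-Lipschitz with inverse $r\theta\mapsto rR_{-\psi(r)}\theta$. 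The differential $D\Phi$ contains the shear term $r\psi'(r)\,R'_{\psi}\theta\otimes\theta$. Computing $A-I$ at points where $\theta$ is not orthogonal to the rotation plane then gives $|A(x)-I|\gtrsim|\psi'(r)|\approx1$ on a cone region of angular measure bounded below.

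\textbf{Step 3: statements (ii) and (iii).} We need only $a\approx1$ on a positive proportion of each Whitney region, with $a\lesssim1$ everywhere. Then
\[
h(r,Q)^2\approx\frac{1}{\sigma(\Delta(Q,r))}\int_{T(Q,r)}\frac{dx}{\delta(x)} .
\]
Up to constants, this is the size of a Carleson box divided by that of its base, so it is comparable to $1$ uniformly. The upper bound gives (ii). The lower bound, for $Q$ in the region where $\theta$ is not orthogonal to the rotation plane, gives (iii).

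The delicate point is the lower bound on $|A-I|$. The pullback matrix must not be equal to $I$ even where $\Phi$ is a local isometry-like shear. Since a nontrivial shear has $D\Phi^{-1}D\Phi^{-T}\neq I$ while $\det D\Phi=1$, it suffices to check that the shear component is nonzero, which is where we need $\theta$ not orthogonal to the rotation plane.
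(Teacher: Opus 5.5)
\paragraph{Verdict.} Step 1 follows the paper's mechanism (pulling back $\Delta$ by a bi-Lipschitz map fixing $\partial B$ and $\mathbf 0$) and gives (i). For a sawtooth $\psi$ you would need the $W^{1,2}$ chain rule for bi-Lipschitz maps rather than the $C^1$ version the paper uses, but that is a technicality. The real gap is in Step 3, and it is fatal for (ii).

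\paragraph{Where Step 3 fails.} You claim
\[
\frac{1}{\sigma(\Delta(Q,r))}\int_{T(Q,r)}\frac{dx}{\delta(x)}\approx 1 ,
\]
but this quantity is $+\infty$. For $t<r/2$ and $|\theta-Q|<r/2$, the point $(1-t)\theta$ lies in $T(Q,r)$. Hence the integral is $\gtrsim r^{n-1}\int_0^{r/2}\frac{dt}{t}=\infty$. Equivalently, each Whitney layer $\{2^{-k-1}\le\delta<2^{-k}\}$ with $2^{-k}<r/2$ contributes $\sim r^{n-1}$, and there are infinitely many such layers.

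In your construction $|\psi'|=1$ a.e.\ on every layer. A direct computation gives $|A(x)-I|\ge |x|\,|\psi'(|x|)|\,|P\theta|$, where $P$ is the projection onto the rotation plane. So $h(r,Q)=\infty$ for every small $r$ and every $Q$ with $|PQ|$ bounded below. In other words, ``$a\approx 1$ on a positive proportion of every Whitney region'' is exactly the situation in which the Carleson norm is infinite.

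\paragraph{Why thinning the layers does not help.} Suppose $a(x)\lesssim\alpha(\delta(x))$ everywhere, and $a(x)\gtrsim\alpha(\delta(x))$ for directions $x/|x|$ in a fixed open set. This is the case, for instance, for any rotation angle depending only on $|x|$. Then $h(r,Q)^2\lesssim\int_0^r\alpha(t)^2\,\frac{dt}{t}$, while at good points $Q$ one has $h(r,Q)^2\gtrsim\int_0^{r/2}\alpha(t)^2\,\frac{dt}{t}$. So (ii) forces $\int_0^{1/2}\alpha^2\,dt/t<\infty$. The tails of that integral then tend to $0$, which rules out (iii). No depth-only construction can satisfy both (ii) and (iii).

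\paragraph{The missing idea: tangential localization.} The paper perturbs $\Phi$ only on disjoint Whitney balls $U_k=B(c_k,\rho_k/10)$, with $c_k=(1-4\rho_k)e_n$ and $\rho_k=100^{-k}$. There is one ball per geometric scale, and all of them accumulate at the single point $Q_0=e_n$. On the core $E_k=B(c_k,\rho_k/40)$ one has $D\Phi=\operatorname{diag}(1+\varepsilon,1,\ldots,1)$.
\begin{itemize}
\item For (ii): the region where $a\neq0$ coming from $U_k$ has diameter and depth $\sim\rho_k$. It therefore contributes $\lesssim\varepsilon^2\rho_k^{n-1}$ to $\int a^2/\delta$, and it can meet $T(Q,r)$ only if $\rho_k\lesssim r$. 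Summing the geometric series gives $\lesssim\varepsilon^2 r^{n-1}$.
\item For (iii): the ball $E_k$ alone gives $[h(5\rho_k,Q_0)]^2\gtrsim\varepsilon^2|E_k|\rho_k^{-n}\sim\varepsilon^2$.
\end{itemize}
The Carleson measure is bounded because its mass sits near one boundary point with geometric decay. It is non-vanishing because, at each scale $\rho_k$, that mass has full density relative to $\sigma(\Delta(Q_0,\rho_k))$.
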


The construction of the matrix $A$ in Theorem \ref{thm:main-counterexample} is 
quite non-trivial. Instead of carrying it out by prescribing its coefficients 
directly, we first construct a carefully designed bi-Lipschitz homeomorphism $\Phi:\mathbb{R}^n\to\mathbb{R}^n$,
and then define $A$ as the coefficient matrix obtained by writing the Laplacian 
in the new coordinates induced by $\Phi$; see Proposition \ref{lem:pullback}. 
This pullback point of view is the key mechanism of the counterexample. 
The map $\Phi$ fixes the origin ${\bf0}$, fixes the boundary of the ball $B$ pointwise, 
and is a localized perturbation of the identity map; see Proposition \ref{lem:Phi}. 
Consequently, although the coefficients of the resulting operator are changed inside $B$, 
the boundary and the pole are kept fixed. This forces the elliptic measure 
of the pullback operator $L_1$ at the origin to coincide exactly with that of 
the Laplacian, namely the normalized surface measure. 
Hence, $k_{L_1}$ is constant and $\log k_{L_1}\in\mathrm{VMO}(\partial B,\sigma)$.

The remaining part of the construction is designed to make the Carleson 
perturbation non-vanishing. The perturbation of $\Phi$ is localized 
near a sequence $\{E_k\}_{k\in\mathbb{N}}$ of pairwise disjoint small 
balls accumulating at the north pole $Q_0:=e_n$ of $B$; see \eqref{e2.4x}. 
On each core ball $E_k$, the map $\Phi$ acts as a fixed anisotropic 
stretch in the first coordinate direction. Therefore, the corresponding 
pullback coefficient matrix $A$ differs from the identity by an 
amount comparable to $\varepsilon$ on every $E_k$. Since these balls 
occur at infinitely many scales approaching $Q_0$, the Carleson 
quantity $h(r,Q_0)$ has a positive lower bound along the sequence 
of radii comparable to $\rho_k$. Thus,
$\varlimsup\limits_{r\to0^+}h(r,Q_0)>0$, and hence
$\varlimsup\limits_{r\to0^+}\sup\limits_{Q\in\partial B}h(r,Q)>0$. 
On the other hand, the perturbation regions are separated and 
their sizes form a geometric sequence, so their total contribution 
to the Carleson norm is summable. This gives the global 
boundedness of the Carleson quantity while preventing it 
from vanishing at small scales.

The remainder of this article is organized as follows.

Section \ref{2} is devoted to the proof of Theorem \ref{thm:main-counterexample}.
To be precise, in Subsection \ref{2.1} we prove a pullback lemma showing that a bi-Lipschitz map
which fixes both the boundary of $B$ and the origin ${\bf0}$ and transforms the Laplacian into a uniformly elliptic operator in divergence form while preserving the elliptic measure at the origin.
In Subsection \ref{2.2}, we construct a specific boundary-fixing bi-Lipschitz map
used in the proof of Theorem \ref{thm:main-counterexample}. Then, in Subsection \ref{2.3},
we give the proof of Theorem \ref{thm:main-counterexample} by using the pullback lemma
given in Subsection \ref{2.1} and the boundary-fixing bi-Lipschitz map
constructed in Subsection \ref{2.2}.

At the end of this section, we make some notational conventions. Let
${\mathbb N}:=\{1,2,\ldots\}$. We always denote by $C$ a \emph{positive constant}
which is independent of the main parameters involved, but it may vary from line to line.
We also use $C_{\alpha,\beta,...}$ to denote a positive constant depending on
the indicated parameters $\alpha$, $\beta\ldots$. The notation $f\lesssim g$ means that $f\le Cg$.
If $f\lesssim g$ and $g\lesssim f$, we then write $f\sim g$. If $f\le Cg$ and $g=h$
or $g\le h$, we then write $f\lesssim g=h$ or $f\lesssim g\le h$.
For any $x\in{\mathbb{R}^n}$ and $r\in(0,{\infty})$, we denote by
$B(x,r):=\left\{y\in{\mathbb{R}^n}:\ |y-x|<r\right\}$
the ball with center $x$ and radius $r$. For any subset $E$ of $\mathbb{R}^n$,
the \emph{symbol} $\mathbf{1}_E$ denotes its \emph{characteristic function} and
$E^\complement$ its \emph{complement} in $\mathbb{R}^n$. Denote by $\mathbf{0}$
the \emph{origin} of ${\mathbb{R}^n}$. For any $x\in\mathbb{R}^n$ and any nonempty
sets $E_1,E_2\subset\mathbb{R}^n$, let
$$
\mathrm{dist\,}(x,E_1):=\inf\{|x-y|:y\in E_1\},$$
and
$$\mathrm{dist\,}(E_1,E_2):=\inf\{|y-z|: y\in E_1,z\in E_2\}.$$
Moreover, we use $\varlimsup$ to denote the \emph{limit superior}.
Finally, in all proofs we consistently retain the notation
introduced in the original theorem (or related statement).

\section{Proof of Main Results}\label{2}

The main task of this section is to prove Theorem \ref{thm:main-counterexample},
which consists of three subsections. In Subsection \ref{2.1}, we establish
a pullback lemma, which shows that
a bi-Lipschitz map, with fixing both the boundary and the pole, transforms the Laplacian into a
uniformly elliptic operator in divergence form without changing the elliptic measure at the pole.
In Subsection \ref{2.2}, we construct a specific bi-Lipschitz map with fixing both the boundary and the pole.
Finally, in Subsection \ref{2.3}, we show Theorem \ref{thm:main-counterexample} by using the pullback lemma
given in Subsection \ref{2.1} and the bi-Lipschitz map constructed in Subsection \ref{2.2}.

\subsection{A Pullback Lemma for Boundary-Fixing Maps}\label{2.1}

To state the main results of this subsection, we first recall some necessary
notation. Let $U\subset\mathbb{R}^n$ be an open set. Denote by the
\emph{symbol $L^1_{\mathrm{loc}}(U)$} the set
of all locally integrable functions on $U$, by the \emph{symbol $C(U)$}
the space of all continuous real-valued functions
on $U$, by the \emph{symbol $C^{\infty}(U)$} the space of all smooth
real-valued functions on $U$, and by the \emph{symbol
$C_{\rm c}^{\infty}(U)$} the space of all smooth functions with compact support in $U$.
The \emph{Sobolev space $W^{1,2}(U)$} consists of all $L^2(U)$ functions
whose first weak derivatives also belong to $L^2(U)$.
The \emph{space $W^{1,2}_{\mathrm{loc}}(U)$} denotes the local Sobolev space
on $U$. The \emph{space $W^{1,2}_0(U)$} is the \emph{closure} of $C_{\rm c}^{\infty}(U)$ in $W^{1,2}(U)$.

Let $M$ be an $n\times n$ real-valued matrix. Denote by
$$
|M|:=\sup\{|M\xi|:\xi\in\mathbb{R}^n,\ |\xi|=1\}
$$
the \emph{operator norm} of $M$, by $M^T$ its \emph{transpose},
and by $\det M$ its \emph{determinant}.
Furthermore, whenever $M$ is invertible, denote by $M^{-1}$
its \emph{inverse}, and let $M^{-T}:=(M^{-1})^T$.
Moreover, the \emph{singular values} of a real-valued $n\times n$
matrix $M$ are defined as the non-negative square roots
of the eigenvalues of the symmetric matrix $M^TM$.

The aim of this subsection is to construct the desired elliptic
measure in Theorem \ref{thm:main-counterexample}, which is the
following proposition.
\begin{proposition}\label{lem:pullback}
Let $\Phi:=(\Phi_1,\ldots,\Phi_n): \mathbb{R}^n\to\mathbb{R}^n$ be a bi-Lipschitz homeomorphism
with Lipschitz constants $0<\lambda\leq \Lambda<\infty$, i.e.,
for any $x,y\in\mathbb{R}^n,$
\begin{align*}
\lambda|x-y|\leq|\Phi(x)-\Phi(y)|\leq \Lambda|x-y|.
\end{align*}
Assume that $\Phi$ satisfies the following  conditions:
\begin{enumerate}[{\rm(a)}]
\item $\Phi(B)=B$;
\item $\Phi|_{\partial B}=\operatorname{Id}_{\partial B}$,
where $\operatorname{Id}_{\partial B}$
denotes the identity map on $\partial B$ (the boundary of $B$);
\item $\Phi(\mathbf{0})=\mathbf{0}$;
\item $\Phi$ is $C^1$ in  $B$;
\item for any $x\in B$, $\det D\Phi(x)\in (0,\infty).$
Here, and thereafter, $D\Phi(x)$ denotes the $n\times n$ matrix defined by setting
$D\Phi(x):=(\partial_j\Phi_i(x))_{1\leq i,j\leq n}.$
\end{enumerate}
For any  $x\in B$, let  $M_x:=D\Phi(x)$, $J_\Phi(x):=\det D\Phi(x)$,  and
$A_\Phi(x):=J_\Phi(x) M_x^{-1}M_x^{-T}.$
Let $L_\Phi:=\mathrm{div}\,(A_\Phi\nabla \cdot)$ be the divergence form elliptic operator associated with $A_\Phi$.
Then the following statements hold.
\begin{enumerate}[{\rm(i)}]
\item The matrix  $A_{\Phi}$ is real-valued, symmetric,  and uniformly elliptic in $B$.
\item Let $f\in C(\partial B)$
and $H_f\in C^{\infty}(B)\cap C(\overline B)$ be the unique solution of the Dirichlet problem
\begin{align}\label{eq:lap}
\begin{cases}
\Delta H_f=0&\ \text{in}\ \ B,\\
H_f=f&\ \text{on}\ \ \partial B.
\end{cases}
\end{align}
Here $\overline B$ denotes the closure of $B$ in $\mathbb{R}^n$.
Let $u_f:=H_f\circ \Phi.$
Then $u_f\in W^{1,2}_{\mathrm{loc}}(B)\cap C(\overline B)$ and $u_f$ is the unique
weak solution of the Dirichlet problem
\begin{align}\label{yangmei2}
\begin{cases}
L_\Phi u=0&\ \text{in}\ \ B,\\
u=f&\ \text{on}\ \ \partial B.
\end{cases}
\end{align}
\item The elliptic measure of $L_{\Phi}$ in $B$ with pole at ${\bf0}$ is the normalized
surface measure on $\partial B$, i.e.,
$\omega_{L_{\Phi}}^{\mathbf{0}}=\frac{\sigma}{\sigma(\partial B)}.$
Consequently,
\begin{align*}
k_{L_{\Phi}}\equiv \frac{1}{\sigma(\partial B)}
\quad \text{and}\quad
\log k_{L_{\Phi}}\in \operatorname{VMO}(\partial B,\sigma).
\end{align*}
\end{enumerate}
\end{proposition}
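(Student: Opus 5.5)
The proof follows the standard change-of-variables mechanism, handled in three steps matching (i)--(iii).

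For (i), note that the bi-Lipschitz bounds give $\lambda|\xi|\le|M_x\xi|\le\Lambda|\xi|$ for every $x\in B$ and $\xi\in\mathbb{R}^n$. Indeed, $D\Phi(x)\xi=\lim_{t\to0}[\Phi(x+t\xi)-\Phi(x)]/t$ by (d), so the singular values of $M_x$ lie in $[\lambda,\Lambda]$. Combined with (e), this yields $\lambda^n\le J_\Phi(x)\le\Lambda^n$. Since $\xi^TA_\Phi(x)\xi=J_\Phi(x)|M_x^{-T}\xi|^2$ and $|M_x^{-T}\xi|\in[\Lambda^{-1}|\xi|,\lambda^{-1}|\xi|]$, we get
\[
\lambda^n\Lambda^{-2}|\xi|^2\le\xi^TA_\Phi(x)\xi\le\Lambda^n\lambda^{-2}|\xi|^2 .
\]
Symmetry and real-valuedness are immediate, and measurability follows from continuity of $D\Phi$ on $B$.

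For (ii), the key identity is the chain rule $\nabla u_f(x)=M_x^T(\nabla H_f)(\Phi(x))$. Set $v:=\varphi\circ\Phi^{-1}$ for a test function $\varphi\in C_{\rm c}^\infty(B)$. Then $v$ is Lipschitz with compact support in $B$ by (a), and $\nabla\varphi(x)=M_x^T(\nabla v)(\Phi(x))$. Consequently
\[
\int_B A_\Phi\nabla u_f\cdot\nabla\varphi\,dx=\int_B (\nabla H_f)(\Phi(x))\cdot(\nabla v)(\Phi(x))\,J_\Phi(x)\,dx=\int_B\nabla H_f\cdot\nabla v\,dy=0 .
\]
The second equality is the change of variables $y=\Phi(x)$, which is valid since $\Phi$ is a $C^1$ diffeomorphism of $B$ onto $B$: it is injective, and by (e) and the inverse function theorem $\Phi^{-1}$ is $C^1$ on $B$. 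The last equality holds because $H_f$ is harmonic and $v\in W^{1,2}_0(B)$ with compact support, by approximation. Membership $u_f\in W^{1,2}_{\rm loc}(B)$ follows because $\Phi$ maps compact subsets of $B$ to compact subsets and $|\nabla u_f|\le\Lambda|\nabla H_f\circ\Phi|$. Continuity on $\overline B$ holds because $\Phi$ is a homeomorphism of $\overline B$ onto itself, and $u_f=f$ on $\partial B$ by (b).

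Uniqueness is the step I expect to need the most care, since $u_f$ need not lie in $W^{1,2}(B)$ for merely continuous $f$. I would argue via the maximum principle for weak solutions of uniformly elliptic divergence-form operators. If $w\in W^{1,2}_{\rm loc}(B)\cap C(\overline B)$ solves $L_\Phi w=0$ with $w=0$ on $\partial B$, then for any $\epsilon>0$ the function $(w-\epsilon)^+$ has compact support in $B$. Hence it belongs to $W^{1,2}_0(B)$, and testing against it gives $(w-\epsilon)^+\equiv0$. The same argument applies to $-w$, so $w\equiv0$.

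For (iii), the elliptic measure is characterized by $u(\mathbf 0)=\int_{\partial B}f\,d\omega_{L_\Phi}^{\mathbf0}$ for the unique solution $u$ of \eqref{yangmei2}. By (ii) and (c), this gives
\[
\int_{\partial B} f\,d\omega^{\mathbf0}_{L_\Phi}=u_f(\mathbf0)=H_f(\Phi(\mathbf0))=H_f(\mathbf0)=\frac{1}{\sigma(\partial B)}\int_{\partial B}f\,d\sigma ,
\]
where the last equality is the mean value property. Since this holds for every $f\in C(\partial B)$, the Riesz representation theorem identifies $\omega^{\mathbf0}_{L_\Phi}=\sigma/\sigma(\partial B)$. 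Hence $k_{L_\Phi}$ is the constant $1/\sigma(\partial B)$, and a constant function trivially has zero mean oscillation, so $\log k_{L_\Phi}\in\operatorname{VMO}(\partial B,\sigma)$.
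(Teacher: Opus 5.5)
Your proof is correct. For (i), the existence half of (ii), and (iii) it matches the paper: the same bounds $\lambda^n\Lambda^{-2}|\xi|^2\le A_\Phi\xi\cdot\xi\le\Lambda^n\lambda^{-2}|\xi|^2$, the same test function $\varphi\circ\Phi^{-1}$ with the change of variables $y=\Phi(x)$, and the same use of the mean value property plus Riesz representation.

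The genuine difference is uniqueness in (ii). The paper transports a second solution back to the Laplacian in four steps:
\begin{enumerate}
\item It sets $h:=w\circ\Phi^{-1}$ and uses the Sobolev chain rule for the $C^1$ diffeomorphism $\Phi^{-1}$ to get $h\in W^{1,2}_{\mathrm{loc}}(B)$.
\item It reverses the change of variables to obtain $\int_B\nabla h\cdot\nabla\theta=0$.
\item It applies Weyl's lemma to conclude that $h$ is classically harmonic.
\item It gets $h=H_f$ from classical uniqueness.
\end{enumerate}

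You instead prove a weak maximum principle directly for $L_\Phi$, testing the difference of two solutions against the truncation $(w-\epsilon)^+$. Your route uses only the uniform ellipticity from (i) and continuity up to $\partial B$. It is therefore independent of the pullback structure and of Weyl's lemma, and it works for any bounded measurable uniformly elliptic $A$. The points you should state explicitly are:
\begin{itemize}
\item $\nabla(w-\epsilon)^+=\mathbf{1}_{\{w>\epsilon\}}\nabla w$.
\item The weak identity extends from $C^\infty_{\mathrm{c}}(B)$ to $W^{1,2}$ test functions supported in a fixed compact subset of $B$, by mollification. This is legitimate because $A_\Phi\nabla w\in L^2_{\mathrm{loc}}(B)$.
\item A $W^{1,2}_0(B)$ function with vanishing gradient is zero, by Poincar\'e.
\end{itemize}

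The paper's route gives something stronger in return: every weak solution in $W^{1,2}_{\mathrm{loc}}(B)\cap C(\overline B)$ has the form $H\circ\Phi$ with $H$ harmonic. That is, composition with $\Phi$ is a bijection between the two solution classes, and the whole argument stays within classical harmonic function theory.
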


To show Proposition \ref{lem:pullback}, we need the following Lemma \ref{lem:lap}, which
is a part of \cite[Theorems 1.17 and 1.21]{ABR2001}.

\begin{lemma}\label{lem:lap}
For any given  $f\in C(\partial B)$, there exists the unique
solution $H_f\in C(\overline B)\cap C^{\infty}(B)$ to the Dirichlet problem \eqref{eq:lap}.
Moreover,
\begin{align*}
H_f({\bf0})=\frac{1}{\sigma(\partial B)}\int_{\partial B}f\,d\sigma.
\end{align*}
Consequently,
\begin{align*}
\omega_{\Delta}^{\mathbf{0}}=\frac{\sigma}{\sigma(\partial B)}
\ \ \text{and}\ \
k_{\Delta}\equiv \frac{1}{\sigma(\partial B)}.
\end{align*}
\end{lemma}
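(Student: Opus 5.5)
The plan is the classical Poisson-integral route: existence by an explicit formula, uniqueness by the maximum principle, and the value at the origin by reading off the formula. For $x\in B$ and $\zeta\in\partial B$ define the Poisson kernel
\begin{align*}
P(x,\zeta):=\frac{1-|x|^2}{\sigma(\partial B)\,|x-\zeta|^n},
\end{align*}
and set $H_f(x):=\int_{\partial B}f(\zeta)P(x,\zeta)\,d\sigma(\zeta)$ for $x\in B$, with $H_f:=f$ on $\partial B$.

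First I will verify three facts about $P$. The map $x\mapsto P(x,\zeta)$ is harmonic in $B$, which is a direct computation. Next, $\int_{\partial B}P(x,\zeta)\,d\sigma(\zeta)=1$ for every $x\in B$. Finally, for each $\zeta_0\in\partial B$ and $\delta>0$ we have $\sup_{|\zeta-\zeta_0|\ge\delta}P(x,\zeta)\to0$ as $x\to\zeta_0$. Differentiation under the integral sign, justified because $P$ is smooth in $x$ locally uniformly away from $\partial B$, then gives $H_f\in C^\infty(B)$ and $\Delta H_f=0$.

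Continuity of $H_f$ up to $\partial B$ is the standard approximate-identity argument. Write
\begin{align*}
H_f(x)-f(\zeta_0)=\int_{\partial B}\bigl[f(\zeta)-f(\zeta_0)\bigr]P(x,\zeta)\,d\sigma(\zeta)
\end{align*}
and split the integral into $|\zeta-\zeta_0|<\delta$ and its complement. On the first piece, the uniform continuity of $f$ controls the integrand. On the second, the bound $2\|f\|_\infty$ together with the decay of $P$ handles it. This yields $H_f\in C(\overline B)$.

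Uniqueness follows from the weak maximum principle. If $H_1,H_2\in C(\overline B)\cap C^\infty(B)$ both solve the problem, then $H_1-H_2$ is harmonic in $B$, continuous on $\overline B$, and vanishes on $\partial B$. Applying the maximum principle to $\pm(H_1-H_2)$ gives $H_1\equiv H_2$.

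For the mean value formula, note that $P(\mathbf0,\zeta)=1/\sigma(\partial B)$ because $|\zeta|=1$. Hence
\begin{align*}
H_f(\mathbf0)=\frac{1}{\sigma(\partial B)}\int_{\partial B}f\,d\sigma.
\end{align*}
By the definition of elliptic measure, $H_f(\mathbf0)=\int f\,d\omega_\Delta^{\mathbf0}$ for all $f\in C(\partial B)$. The Riesz representation theorem then identifies $\omega_\Delta^{\mathbf0}=\sigma/\sigma(\partial B)$, and therefore $k_\Delta\equiv1/\sigma(\partial B)$.

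The main obstacle is proving $\int_{\partial B}P(x,\cdot)\,d\sigma=1$. I would first try the symmetry identity $|x-\zeta|=\bigl||x|\zeta-x/|x|\bigr|$ for $x\neq\mathbf0$. It turns $u(x):=\int_{\partial B}P(x,\zeta)\,d\sigma(\zeta)$ into $\int_{\partial B}P(|x|\zeta,\,x/|x|)\,d\sigma(\zeta)$, which is an average of the harmonic function $y\mapsto P(y,x/|x|)$ over the sphere of radius $|x|$. By the mean value property this equals $P(\mathbf0,\cdot)\,\sigma(\partial B)=1$.

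Alternatively, this identity is exactly the content of the cited \cite[Theorems 1.17 and 1.21]{ABR2001}, so it may simply be quoted there.
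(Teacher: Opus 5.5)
Your proposal is correct. The paper does not prove this lemma itself; it quotes it from \cite[Theorems 1.17 and 1.21]{ABR2001}, and your Poisson-integral argument is essentially the proof given there: harmonicity of $P$, the symmetry identity plus the mean value property to get $\int_{\partial B}P(x,\cdot)\,d\sigma=1$, the approximate-identity argument for continuity up to $\partial B$, and the maximum principle for uniqueness. Your final identification of $\omega_\Delta^{\mathbf0}$ through uniqueness in the Riesz representation theorem is also exactly how the paper argues the analogous step in Proposition~\ref{lem:pullback}(iii).
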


Moreover, from \cite[Proposition 9.6 and Lemma 9.5]{b11}, we deduce the following two conclusions.

\begin{lemma}\label{lem:sobolev-pullback}
Let $U,V\subset\mathbb{R}^n$ be open and let $\Psi: U\to V$ be a $C^1$ diffeomorphism.
Let  $u\in W^{1,2}_{\mathrm{loc}}(V)$ and
$w:=u\circ \Psi.$
Then $w\in W^{1,2}_{\mathrm{loc}}(U)$.
Moreover, for almost every $x\in U$,
\begin{align*}
\nabla w(x)=D\Psi(x)^T\nabla u(\Psi(x)).
\end{align*}
\end{lemma}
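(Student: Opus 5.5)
The approach is to prove the statement by approximation: first verify it for smooth $u$, where it is the classical chain rule, and then pass to the limit using the change of variables formula. Since everything is local, fix an open set $U'\Subset U$ with compact closure $\overline{U'}\subset U$, and set $K:=\Psi(\overline{U'})$. Then $K$ is a compact subset of $V$, because $\Psi$ is continuous. Choose an open set $V'$ with $K\subset V'\Subset V$.

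On the compact set $\overline{U'}$, the continuous functions $|D\Psi|$ and $|\det D\Psi|$ are bounded above. Since $\Psi^{-1}$ is also $C^1$, the function $|\det D\Psi|$ is also bounded below by a positive constant there; call its lower bound $c_0>0$ and the upper bound of $|D\Psi|$ by $C_0$.

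\textbf{Step 1 (approximation in $V$).} Mollify $u$ to get $u_k\in C^\infty(V')$ with $u_k\to u$ and $\nabla u_k\to\nabla u$ in $L^2(V')$. This is the standard local approximation of $W^{1,2}_{\mathrm{loc}}$ functions, valid once the mollification radius is less than $\mathrm{dist}(V',\partial V)$.

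\textbf{Step 2 (smooth case).} Set $w_k:=u_k\circ\Psi$. This function is $C^1$ on $U'$, and the classical chain rule gives
\begin{align*}
\nabla w_k(x)=D\Psi(x)^T\nabla u_k(\Psi(x))\quad\text{for all } x\in U'.
\end{align*}

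\textbf{Step 3 (transfer of $L^2$ convergence).} For any $g\in L^2(V')$, the change of variables $y=\Psi(x)$ gives
\begin{align*}
\int_{U'}|g(\Psi(x))|^2\,dx=\int_{\Psi(U')}|g(y)|^2\,|\det D\Psi(\Psi^{-1}(y))|^{-1}\,dy\le c_0^{-1}\|g\|_{L^2(V')}^2 .
\end{align*}
Applying this bound to $g=u_k-u$ gives $w_k\to w$ in $L^2(U')$. Applying it to $g=\nabla u_k-\nabla u$, together with the bound $|D\Psi|\le C_0$, gives
\begin{align*}
D\Psi^T(\nabla u_k\circ\Psi)\to G:=D\Psi^T(\nabla u\circ\Psi)\quad\text{in } L^2(U').
\end{align*}
The same estimate shows that composition with $\Psi$ respects almost-everywhere equivalence classes, so $w$ and $G$ are well defined. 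This is the one point requiring care, and it is settled by the positive lower bound $c_0$ on the Jacobian.

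\textbf{Step 4 (closing).} For $\varphi\in C_{\rm c}^\infty(U')$, integrate by parts for each $k$:
\begin{align*}
\int w_k\,\partial_j\varphi=-\int \partial_j w_k\,\varphi .
\end{align*}
Letting $k\to\infty$ yields
\begin{align*}
\int w\,\partial_j\varphi=-\int G_j\,\varphi .
\end{align*}
Hence $w\in W^{1,2}(U')$ with $\nabla w=G$ almost everywhere on $U'$. Since $U'\Subset U$ was arbitrary, $w\in W^{1,2}_{\mathrm{loc}}(U)$ and the formula
\begin{align*}
\nabla w(x)=D\Psi(x)^T\nabla u(\Psi(x))
\end{align*}
holds for almost every $x\in U$.

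The main obstacle is modest. It is Step 3: one needs the two-sided local bounds on $D\Psi$ and on its Jacobian. These come from $\Psi$ being a $C^1$ diffeomorphism, specifically from $\Psi^{-1}$ being $C^1$ as well.
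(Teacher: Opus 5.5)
Your proof is correct, and it takes the same route as the source the paper relies on: the paper does not prove this lemma itself but cites Brezis's change-of-variables result (Proposition 9.6 in \cite{b11}), whose proof is exactly your argument — smooth approximation, transfer of $L^2$ convergence through the Jacobian lower bound, and passage to the limit in the weak-derivative identity. Your localization to $U'\Subset U$, which makes $|D\Psi|$ bounded above and $|\det D\Psi|$ bounded below, is what reduces this local statement for a general $C^1$ diffeomorphism to Brezis's hypothesis of globally bounded Jacobians.
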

\begin{lemma}\label{lem:lipschitz-compact-support}
Let $U\subset\mathbb{R}^n$ be open, and let $\psi\in C^1_{\mathrm{c}}(U)$.
Then $\psi\in W^{1,2}_0(U)$.
\end{lemma}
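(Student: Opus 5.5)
The plan is to approximate $\psi$ by mollification and show that the mollified functions are in $C^\infty_{\rm c}(U)$ and converge to $\psi$ in $W^{1,2}(U)$. Since $W^{1,2}_0(U)$ is defined as the closure of $C^\infty_{\rm c}(U)$ in $W^{1,2}(U)$, this gives the claim.

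First, let $K:=\operatorname{supp}\psi$, a compact subset of $U$, and set $d:=\mathrm{dist\,}(K,U^\complement)$, which is positive; if $U=\mathbb{R}^n$, take $d:=1$. Fix a standard mollifier $\eta\in C^\infty_{\rm c}(B(\mathbf{0},1))$ with $\eta\ge0$ and $\int\eta=1$, and put $\eta_\epsilon:=\epsilon^{-n}\eta(\cdot/\epsilon)$. Extend $\psi$ by zero to all of $\mathbb{R}^n$; it is still $C^1$ there, because it vanishes on a neighborhood of $U^\complement$. For $\epsilon\in(0,d/2)$, define $\psi_\epsilon:=\eta_\epsilon*\psi$. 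Then $\psi_\epsilon$ is smooth, and its support lies in the closed $\epsilon$-neighborhood $K_\epsilon$ of $K$. This set is compact and contained in $U$, so $\psi_\epsilon\in C^\infty_{\rm c}(U)$.

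Next, since $\psi\in C^1_{\rm c}$, differentiation commutes with convolution classically: $\partial_j\psi_\epsilon=\eta_\epsilon*\partial_j\psi$. Both $\psi$ and $\partial_j\psi$ are continuous with compact support, hence uniformly continuous. Standard mollifier estimates then give $\psi_\epsilon\to\psi$ and $\partial_j\psi_\epsilon\to\partial_j\psi$ uniformly on $\mathbb{R}^n$ as $\epsilon\to0^+$. All of these functions are supported in the fixed compact set $K_{d/2}$, which has finite Lebesgue measure. Uniform convergence therefore implies $L^2(U)$ convergence, with the bound $\|g\|_{L^2}\le|K_{d/2}|^{1/2}\|g\|_{L^\infty}$. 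Moreover, the classical derivatives of $\psi$ coincide with its weak derivatives, by integration by parts against test functions. Together these give $\psi_\epsilon\to\psi$ in $W^{1,2}(U)$, and in particular $\psi\in W^{1,2}(U)$.

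No step is a serious obstacle. The only point requiring care is keeping the supports of the $\psi_\epsilon$ inside a fixed compact subset of $U$, which the choice $\epsilon<d/2$ ensures. That is what allows uniform convergence to be upgraded to $L^2$ convergence, and what keeps the approximants in $C^\infty_{\rm c}(U)$.
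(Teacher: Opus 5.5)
Your proof is correct. The paper gives no argument of its own for this lemma. It obtains the lemma from \cite[Lemma 9.5]{b11}, which states that every $u\in W^{1,p}(\Omega)$ with compact support in $\Omega$ belongs to $W^{1,p}_0(\Omega)$. Brezis proves that result with Friedrichs' mollification theorem and a cutoff function equal to $1$ on $\mathrm{supp}\,u$.

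Your argument is the same mollification idea specialized to $C^1$ data, and the specialization simplifies both ingredients. Because $\psi$ already has compact support, taking $\epsilon<d/2$ keeps $\eta_\epsilon*\psi$ in $C^\infty_{\mathrm{c}}(U)$ without any cutoff. Because $\psi$ and $\partial_j\psi$ are uniformly continuous, uniform convergence on the fixed compact set $K_{d/2}$ replaces the $L^p$-continuity of translations needed for general Sobolev functions.

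The cited route covers arbitrary compactly supported $W^{1,p}$ functions, a generality the paper never uses: the lemma is only applied to $C^1_{\mathrm{c}}$ functions such as $\varphi\circ\Phi^{-1}$ and $\theta\circ\Phi$. Your route is self-contained and works directly with the paper's definition of $W^{1,2}_0(U)$ as the closure of $C^\infty_{\mathrm{c}}(U)$. Brezis defines this space via $C^1_{\mathrm{c}}$, so your mollification step is exactly what reconciles the two conventions.
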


As a part of \cite[Section 27]{d69}, we have the following lemma.

\begin{lemma}\label{lem:weyl}
Let $\Omega\subset\mathbb{R}^n$ be an open set. Assume that $u\in L^1_{\mathrm{loc}}(\Omega)$ satisfies,
for any $\varphi\in C^\infty_{\mathrm{c}}(\Omega)$,
\begin{align*}
\int_{\Omega} u(x)\Delta\varphi(x)\,dx=0.
\end{align*}
Then $u\in C^{\infty}(\Omega)$ and $\Delta u=0$ in $\Omega$.
\end{lemma}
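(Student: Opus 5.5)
We prove Lemma \ref{lem:weyl} (Weyl's lemma) by mollification. Fix a radial mollifier $\eta\in C_{\rm c}^\infty(B(\mathbf{0},1))$ with $\eta\ge0$ and $\int\eta=1$, and set $\eta_\epsilon(x):=\epsilon^{-n}\eta(x/\epsilon)$. For $\Omega_\epsilon:=\{x\in\Omega:\mathrm{dist\,}(x,\Omega^\complement)>\epsilon\}$, define $u_\epsilon:=u*\eta_\epsilon$, which lies in $C^\infty(\Omega_\epsilon)$.

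\textbf{Step 1: the mollifications are harmonic.} For $x\in\Omega_\epsilon$ we have
$$\Delta u_\epsilon(x)=\int_\Omega u(y)\,\Delta_y[\eta_\epsilon(x-y)]\,dy.$$
Differentiation under the integral is justified because $u\in L^1_{\mathrm{loc}}(\Omega)$ and $\eta_\epsilon$ is smooth with compact support. Since $y\mapsto\eta_\epsilon(x-y)$ belongs to $C_{\rm c}^\infty(\Omega)$, the hypothesis gives $\Delta u_\epsilon=0$ in $\Omega_\epsilon$. So $u_\epsilon$ is a classical harmonic function there.

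\textbf{Step 2: harmonic functions are invariant under radial mollification.} For a smooth harmonic $v$ on $\Omega_\epsilon$, the mean value property on spheres, integrated against the radial profile of $\eta_\delta$, gives $v*\eta_\delta=v$ on $\Omega_{\epsilon+\delta}$. Applying this to $v=u_\epsilon$ and using commutativity of convolution, we get on $\Omega_{\epsilon+\delta}$
$$u_\epsilon=u_\epsilon*\eta_\delta=(u*\eta_\delta)*\eta_\epsilon=u_\delta*\eta_\epsilon=u_\delta.$$
The last equality uses Step 2 again, this time applied to $u_\delta$.

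\textbf{Step 3: passing to the limit.} Fix a compact set $K\subset\Omega_{2\epsilon_0}$. For all $\delta<\epsilon_0$ we then have $u_\delta=u_{\epsilon_0}$ on $K$. But $u_\delta\to u$ in $L^1(K)$ as $\delta\to0$, by the standard approximation property of mollifiers for $L^1_{\mathrm{loc}}$ functions. Hence $u=u_{\epsilon_0}$ almost everywhere on $K$.

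\textbf{Conclusion.} Since $K$ is arbitrary and the functions $u_\epsilon$ agree wherever they are simultaneously defined, they patch together into a single function in $C^\infty(\Omega)$ that is harmonic and equals $u$ almost everywhere. Identifying $u$ with this representative yields $u\in C^\infty(\Omega)$ and $\Delta u=0$ in $\Omega$.

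\textbf{Main obstacle.} The only nontrivial input is Step 2. It requires the mean value property for smooth harmonic functions, which follows from differentiating the spherical average in the radius and applying the divergence theorem. The rest is bookkeeping on the shrinking domains $\Omega_\epsilon$.
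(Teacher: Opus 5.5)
Your proof is correct. The paper gives no proof of its own here: it simply quotes the lemma from a text on distributions, \cite[Section 27]{d69}. So your argument is a self-contained replacement for a citation rather than a variant of an argument in the paper.

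Your route is the classical one. The mollifications $u_\epsilon$ are classical harmonic functions on $\Omega_\epsilon$. The mean value property makes them invariant under further radial mollification, and this forces $u_\epsilon=u_\delta$ on $\Omega_{\epsilon+\delta}$. The convergence $u_\delta\to u$ in $L^1_{\mathrm{loc}}$ then identifies $u$ almost everywhere with a smooth harmonic function. The only analytic input is the mean value property for smooth harmonic functions. The citation buys brevity and a more general distributional statement, which the paper does not need, since it applies the lemma only to $h\in W^{1,2}_{\mathrm{loc}}(B)$.

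One piece of bookkeeping: you have shown $u_\epsilon=u_\delta$ only on $\Omega_{\epsilon+\delta}$, not on the whole common domain $\Omega_{\max\{\epsilon,\delta\}}$ as your conclusion asserts. This still suffices for the patching. Define the representative to be $u_\epsilon$ on $\Omega_{2\epsilon}$; for $\delta<\epsilon$ you have $\Omega_{2\epsilon}\subset\Omega_{\epsilon+\delta}$, so the definitions are consistent. Alternatively, note that two continuous functions that both equal $u$ almost everywhere on an open set coincide there.
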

We show the following lemma.
\begin{lemma}\label{lem:matrix-bounds}
Let $\Phi: \mathbb{R}^n\to\mathbb{R}^n$ be bi-Lipschitz with Lipschitz constants $0<\lambda\le \Lambda<\infty$.
Assume that $\Phi$ is differentiable at a point $x\in\mathbb{R}^n$.
Let $M_x:=D\Phi(x).$ Then the following assertions hold.
\begin{enumerate}[{\rm(i)}]
\item For any $\xi\in\mathbb{R}^n$,
$\lambda|\xi|\le |M_x\xi|\le \Lambda|\xi|.$
\item The matrix $M_x$ is invertible.
\item Every singular value of $M_x$ belongs to the interval $[\lambda,\Lambda]$.
\item For any $\xi\in\mathbb{R}^n$,
\begin{align*}
\Lambda^{-1}|\xi|\le\left|M_x^{-1}\xi\right|\le \lambda^{-1}|\xi|
\ \ \text{and}\ \
\Lambda^{-1}|\xi|\le \left|M_x^{-T}\xi\right|\le \lambda^{-1}|\xi|.
\end{align*}
\item If $\det M_x\in (0,\infty)$, then $\det M_x\in [\lambda^n,\Lambda^n].$
\end{enumerate}
\end{lemma}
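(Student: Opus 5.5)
The plan is to derive (i) from the definition of the derivative together with the bi-Lipschitz bounds, and then get (ii)–(v) from (i) by linear algebra.

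For (i), fix $\xi\in\mathbb{R}^n$ and $t>0$. Differentiability of $\Phi$ at $x$ gives
\begin{align*}
\Phi(x+t\xi)-\Phi(x)=tM_x\xi+o(t)\quad\text{as}\ t\to0^+.
\end{align*}
The bi-Lipschitz hypothesis applied to $x+t\xi$ and $x$ yields
\begin{align*}
\lambda t|\xi|\le|\Phi(x+t\xi)-\Phi(x)|\le\Lambda t|\xi|.
\end{align*}
Dividing by $t$ and letting $t\to0^+$, the $o(t)/t$ term disappears, and we obtain $\lambda|\xi|\le|M_x\xi|\le\Lambda|\xi|$.

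For (ii), the lower bound in (i) together with $\lambda>0$ shows that $\ker M_x=\{\mathbf 0\}$. Since $M_x$ is a square matrix, it is therefore invertible.

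For (iii), let $s$ be a singular value of $M_x$. Then $s^2$ is an eigenvalue of $M_x^TM_x$ with some unit eigenvector $v$, so
\begin{align*}
s^2=\langle M_x^TM_xv,v\rangle=|M_xv|^2.
\end{align*}
By (i), $|M_xv|^2\in[\lambda^2,\Lambda^2]$, and hence $s\in[\lambda,\Lambda]$.

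For (iv), substitute $\xi=M_x^{-1}\eta$ into (i). This gives $\lambda|M_x^{-1}\eta|\le|\eta|\le\Lambda|M_x^{-1}\eta|$, which is exactly the first pair of inequalities. For the transpose, note that $M_x^{-T}$ has the same singular values as $M_x^{-1}$, namely the reciprocals $1/s$ of the singular values of $M_x$. These lie in $[\Lambda^{-1},\lambda^{-1}]$ by (iii). For any matrix $N$, the quantity $|N\xi|/|\xi|$ lies between the smallest and largest singular values of $N$. Applying this with $N=M_x^{-T}$ gives the second pair.

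For (v), use $|\det M_x|=\sqrt{\det(M_x^TM_x)}$, which equals the product of the singular values of $M_x$. By (iii), this product lies in $[\lambda^n,\Lambda^n]$. Since $\det M_x>0$ by assumption, $\det M_x=|\det M_x|\in[\lambda^n,\Lambda^n]$.

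There is no real obstacle in any of these steps. The only point needing care is the limit argument in (i), which uses nothing beyond differentiability at the single point $x$.
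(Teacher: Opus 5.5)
Your proof is correct and follows the paper's argument essentially step for step: (i) from the difference quotient plus the bi-Lipschitz bounds, (ii) from injectivity, (iii) via an eigenvector of $M_x^TM_x$, and (v) via $(\det M_x)^2=\det(M_x^TM_x)=\prod_j s_j^2$. The only deviation is the $M_x^{-T}$ bound in (iv): the paper uses duality, namely $|M_x^{-T}\xi|=\sup_{|\eta|=1}|\xi\cdot M_x^{-1}\eta|\le\lambda^{-1}|\xi|$ together with $|\xi|=|M_x^TM_x^{-T}\xi|\le\Lambda|M_x^{-T}\xi|$, whereas you use the reciprocal singular values of $M_x^{-T}$ and the Rayleigh-quotient bounds, and both routes are valid.
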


\begin{proof}
Let $\xi\in\mathbb{R}^n$. Since $\Phi$ is differentiable at $x$,  it  follows that
\begin{align}\label{e2.1}
\lim_{t\to0}\frac{|\Phi(x+t\xi)-\Phi(x)-tM_x\xi|}{|t|}=0.
\end{align}
On the other hand, by the assumption that $\Phi$ is bi-Lipschitz, we find that,
for any $x,\xi\in\mathbb{R}^n$ and $t\in\mathbb{R},$
\begin{align}\label{e2.2}
\lambda|t||\xi|\le|\Phi(x+t\xi)-\Phi(x)|\le \Lambda|t||\xi|.
\end{align}
Combining \eqref{e2.1} and \eqref{e2.2} and letting $t\to0$ in \eqref{e2.2},
we conclude that (i) holds.

Meanwhile, from (i) we infer that  $M_x$ is injective, which implies that $M_x$
is invertible. Thus, (ii) holds.

Let $\kappa$ be an eigenvalue of the symmetric matrix $M_x^TM_x$ and let $\eta\ne 0$
be an eigenvector such that $M_x^TM_x\eta=\kappa\eta.$ By (i), we have
\begin{align*}
\lambda^2|\eta|^2\leq\left|M_x\eta\right|^2=\eta\cdot M_x^TM_x\eta
=\kappa|\eta|^2\leq \Lambda^2|\eta|^2.
\end{align*}
This implies that $\kappa\in [\lambda^2,\Lambda^2],$  which shows (iii).

Furthermore, from (i) and (ii) we immediately deduce that, for any $\xi\in\mathbb{R}^n,$
$\Lambda^{-1}|\xi|\le |M_x^{-1}\xi|\le \lambda^{-1}|\xi|.$
Using this, we  find that, for any $\xi\in\mathbb{R}^n,$
\begin{align}\label{e2.3}
\left|M_x^{-T}\xi\right|=\sup_{|\eta|=1}\left|M_x^{-T}\xi\cdot \eta\right|
=\sup_{|\eta|=1}\left|\xi\cdot M_x^{-1}\eta\right|\leq
|\xi|\sup_{|\eta|=1}\left|M_x^{-1}\eta\right|\leq \lambda^{-1}|\xi|.
\end{align}
Moreover, by (i), we conclude that, for any $\xi\in\mathbb{R}^n$,
\begin{align*}
\left|M_x^T\xi\right|=\sup_{|\eta|=1}\left|M_x^T\xi\cdot \eta\right|
=\sup_{|\eta|=1}\left|\xi\cdot M_x\eta\right|\le
|\xi|\sup_{|\eta|=1}\left|M_x\eta|\le \Lambda|\xi\right|.
\end{align*}
This implies that
\begin{align*}
|\xi|=\left|M_x^TM_x^{-T}\xi\right|\le \Lambda\left|M_x^{-T}\xi\right|,
\end{align*}
which, combined with \eqref{e2.3}, further implies that (iv) holds.

Let $\{s_i\}_{i=1}^n$ denote the singular values of $M_x$. Then $\{s_i\}_{i=1}^n$ are
eigenvalues of $M_x^TM_x$. From this, it follows that
\begin{align}\label{e2.4}
\det\left(M_x^TM_x\right)=\prod_{j=1}^n s_j^2.
\end{align}
On the other hand, note that
\begin{align*}
\det\left(M_x^TM_x\right)=\det\left(M_x^T\right)\det\left(M_x\right)=\left(\det M_x\right)^2.
\end{align*}
This, together with \eqref{e2.4}, further implies that
\begin{align*}
\left(\det M_x\right)^2=\det\left(M_x^TM_x\right)=\prod_{j=1}^n s_j^2.
\end{align*}
By this, the assumption that $\det M_x\in (0,\infty)$, and (iii), we conclude the
desired conclusion (v) holds. This hence finishes the proof of Lemma \ref{lem:matrix-bounds}.
\end{proof}

Now, we prove  Proposition \ref{lem:pullback} by using Lemmas \ref{lem:lap}, \ref{lem:sobolev-pullback},
\ref{lem:lipschitz-compact-support}, \ref{lem:weyl}, and \ref{lem:matrix-bounds}.

\begin{proof}[Proof of Proposition \ref{lem:pullback}]
Obviously, $A_\Phi$ is real-valued and  symmetric.
Note that,  for any $x,\xi\in\mathbb{R}^n,$
\begin{align*}
A_\Phi(x)\xi\cdot\xi=J_\Phi(x)M_x^{-1}M_x^{-T}\xi\cdot\xi
=J_\Phi(x)\left|M_x^{-T}\xi\right|^2.
\end{align*}
Using this and both (iv) and (v) of Lemma \ref{lem:matrix-bounds}, we conclude that,
for any $x,\xi\in\mathbb{R}^n,$
\begin{align*}
\lambda^n\Lambda^{-2}|\xi|^2\leq A_\Phi(x)\xi\cdot \xi=J_\Phi(x)\left|M_x^{-T}\xi\right|^2
\le \Lambda^n\lambda^{-2}|\xi|^2.
\end{align*}
Thus,  $A_\Phi$ is  uniformly elliptic in $B$. This proves (i).

Next, we prove (ii). It is easy to verify that $u_f\in C(\overline B)$ and
$u_f|_{\partial B}=f$. Since $\Phi$ is  $C^1$ on $B$ and $H_f\in C^\infty(B)$, it follows
that $u_f\in C^1(B)\subset W^{1,2}_{\mathrm{loc}}(B)$ and, for any $x\in B,$
\begin{align*}
\nabla u_f(x)=D\Phi(x)^T \nabla H_f(\Phi(x))=M_x^T\nabla H_f(\Phi(x)),
\end{align*}
which implies that, for any $\varphi\in C^\infty_{\mathrm{c}}(B)$ and $x\in B$,
\begin{align}\label{yangmei}
A_\Phi(x)\nabla u_f(x)\cdot \nabla \varphi(x)
&=J_\Phi(x) M_x^{-1}M_x^{-T}M_x^T\nabla H_f(\Phi(x))\cdot \nabla \varphi(x)\notag\\
&=J_\Phi(x)\nabla H_f(\Phi(x))\cdot M_x^{-T}\nabla \varphi(x).
\end{align}
For any $\varphi\in C_{\rm c}^{\infty}(B)$, let  $\psi:=\varphi\circ \Phi^{-1}.$
By Lemma \ref{lem:lipschitz-compact-support}, we find that  $\psi\in C^1_{\mathrm{c}}(B)
\subset W^{1,2}_{0}(B)$ and, for any $x\in B,$
\begin{align*}
\nabla \varphi(x)=D\Phi(x)^{T}\nabla \psi(\Phi(x))
=M_x^T\nabla \psi(\Phi(x)).
\end{align*}
From this, \eqref{yangmei}, and the assumption that $H_f$ is the solution of the Dirichlet
problem \eqref{eq:lap}, we infer that, for any $\varphi\in C_{\rm c}^{\infty}(B),$
\begin{align}\label{wulai}
\int_{B}A_\Phi(x)\nabla u_f(x)\cdot \nabla \varphi(x)\,dx
&=\int_{B}\det D\Phi(x)\nabla H_f(\Phi(x))\cdot \nabla\psi(\Phi(x))\,dx\notag\\
&=\int_{B}\nabla H_f(x)\cdot \nabla\psi(x)\,dx=0.
\end{align}
Thus, $u_f$ is a weak solution of the Dirichlet problem \eqref{yangmei2}.

It remains to show the uniqueness. Let
$w\in C(\overline B)\cap W^{1,2}_{\mathrm{loc}}(B)$ be another weak solution
to \eqref{yangmei2}. Let $h:=w\circ \Phi^{-1}.$ We claim that $h$ is a
solution to the Dirichlet problem \eqref{eq:lap}. If this claim holds, then, by the uniqueness
part of Lemma \ref{lem:lap}, we conclude that $h=H_f$ and
$$w=h\circ \Phi=H_f\circ \Phi=u_f$$
on $B$, which proves the uniqueness. Now, we prove the above claim.
It is easy to verify that $h\in C(\overline B)$ and $h|_{\partial B}=f.$
By the inverse function theorem (see, for instance, \cite[Theorem 9.24]{r76})
and the facts that $\Phi$ is $C^1$ in $B$ and $\det D\Phi(x)\in (0,\infty)$ for any $x\in B$,
we  conclude that $\Phi^{-1}:B\to B$ is a $C^1$ diffeomorphism.
From this and Lemma \ref{lem:sobolev-pullback}, we deduce that $h\in  W^{1,2}_{\mathrm{loc}}(B)$
and, for almost every $x\in B,$
\begin{align*}
\nabla w(x)=D\Phi(x)^T\nabla h(\Phi(x))=M_x^T\nabla h(\Phi(x)).
\end{align*}
For any  $\theta\in C_{\rm c}^\infty(B)$, let $\varphi:=\theta\circ \Phi.$
By Lemma \ref{lem:lipschitz-compact-support}, we find that $\varphi\in C_{\rm c}^1(B)
\subset W_0^{1,2}(B)$. Then an argument similar to that used 
in the proof of \eqref{wulai} yields,
for any $\theta\in C_{\rm c}^\infty(B),$
\begin{align*}
\int_B \nabla h(y)\cdot \nabla \theta(y)\,dy
&=\int_BJ_\Phi(x)\nabla h(\Phi(x))\cdot\nabla \theta(\Phi(x))\,dx\\
&=\int_B J_\Phi(x)M_x^{-1}M_x^{-T}M_x^T\nabla h(\Phi(x))\cdot
M_x^T\nabla \theta(\Phi(x))\,dx\\
&=\int_B A_\Phi(x)\nabla w(x)\cdot \nabla \varphi(x)\,dx=0.
\end{align*}
From this and Lemma \ref{lem:weyl}, we infer that $h\in C^\infty(B)$ 
and $\Delta h=0$ in $B$. Thus, $h$ is a solution to the Dirichlet 
problem \eqref{eq:lap}, which completes the proof of (ii).

Finally, we show (iii). By Lemma \ref{lem:lap}, we conclude that
\begin{align*}
u_f({\bf0})=H_f({\bf0})=\frac{1}{\sigma(\partial B)}\int_{\partial B}f\,d\sigma.
\end{align*}
From this, the definition of elliptic measures, and the uniqueness of the
Riesz--Markov representation
theorem (see, for instance, \cite[Theorem 7.17]{folland1995}), we deduce that
\begin{align*}
\omega_{L_{\Phi}}^{\bf0}=\frac{\sigma}{\sigma(\partial B)}.
\end{align*}
This proves (iii), and hence finishes the proof of Proposition \ref{lem:pullback}.
\end{proof}

\subsection{Construction of the Boundary-Fixing Bi-Lipschitz Map}\label{2.2}

In this subsection, we construct the function $\Phi$ in Proposition \ref{lem:pullback}.
Fix the boundary point
\begin{align*}
Q_0:=e_n:=(0,\ldots,0,1)\in \partial B.
\end{align*}
For any $k\in\mathbb{N}$, let
\begin{align*}
\rho_k:=100^{-k}\quad \text{and}\quad c_k:=(1-4\rho_k)e_n\in B.
\end{align*}
For any $k\in\mathbb{N}$, define
\begin{align}\label{e2.4x}
U_k:=B\left(c_k,\frac{\rho_k}{10}\right)\ \ \text{and}\ \
E_k:=B\left(c_k,\frac{\rho_k}{40}\right).
\end{align}
Choose $\chi\in C_{\rm c}^{\infty}(B(\mathbf{0},1/10))$ such that $\chi\equiv 1$
on $B(\mathbf{0},1/40).$ For any $x:=(x_1,\ldots,x_n)\in\mathbb{R}^n$, let
\begin{align*}
\eta(x):=x_1\chi(x).
\end{align*}
Let $\varepsilon\in (0,\infty)$. For any $k\in\mathbb{N}$ and $x\in\mathbb{R}^n,$ let
\begin{align}\label{shiyan}
\psi_k(x):=\varepsilon\rho_k\eta\left(\frac{x-c_k}{\rho_k}\right),\ \
\Psi(x):=\sum_{k\in\mathbb{N}}\psi_k(x),
\ \ \text{and}\ \ \Phi(x):=x+\Psi(x)e_1.
\end{align}
It is easy to verify that, for any $k\in\mathbb{N},$ $\mathrm{supp}\,(\psi_k)\subset U_k$
and $\{U_k\}_{k\in\mathbb{N}}$ are pairwise disjoint. Thus, for any given $x\in\mathbb{R}^n,$
at most one item in the definition of $\Psi$ is non-zero.

\begin{proposition}\label{lem:Phi}
Let  $\Phi$ be the same as in \eqref{shiyan} and
$\varepsilon\in (0,\frac{1}{4\|\, |\nabla \eta|\,\|_{L^{\infty}(\mathbb{R}^n)}})$.
Then the following statements hold.
\begin{enumerate}[{\rm(i)}]
\item $\Phi: \mathbb{R}^n\to\mathbb{R}^n$ is a bi-Lipschitz homeomorphism with Lipschitz constants
$$0<1-2\|\, |\nabla \eta|\,\|_{L^{\infty}(\mathbb{R}^n)} \varepsilon<
1+2\|\, |\nabla \eta|\,\|_{L^{\infty}(\mathbb{R}^n)} \varepsilon<\infty.$$
\item For any $x\in\mathbb{R}^n\setminus (\bigcup_{k\in\mathbb{N}}U_k)$, $\Phi(x)=x$.
In particular, for any $x\in\mathbb{R}^n\setminus B,$ $\Phi(x)=x$.
\item $\Phi(\mathbf{0})=\mathbf{0}$ and $\Phi(B)=B$.
\item $\Phi$ is  $C^{\infty}$ in  $B$.
\item For any $x\in B$, $\det D\Phi(x)\in (0,\infty)$.
\item For any $k\in\mathbb{N}$ and $x\in E_k,$
\begin{align*}
D\Phi(x)=\operatorname{diag}(1+\varepsilon,1,\ldots,1).
\end{align*}
\end{enumerate}
\end{proposition}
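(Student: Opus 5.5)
The plan is to first record the elementary facts about $\Psi$ and then read off each item. Write $N:=\|\,|\nabla\eta|\,\|_{L^\infty(\mathbb{R}^n)}$, which is finite since $\eta=x_1\chi$ is smooth with compact support. Three facts come first.

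\begin{enumerate}[(1)]
\item By the chain rule, $\nabla\psi_k(x)=\varepsilon(\nabla\eta)((x-c_k)/\rho_k)$, so $|\nabla\psi_k|\le\varepsilon N$ everywhere.
\item $\operatorname{supp}\psi_k\subset U_k$. For disjointness, $|c_k-c_{k+1}|=4(\rho_k-\rho_{k+1})\ge 3\rho_k$ exceeds $(\rho_k+\rho_{k+1})/10$, and similarly for nonconsecutive indices.
\item $U_k\subset B$, because $|c_k|+\rho_k/10=1-3.9\rho_k<1$. Also $\operatorname{dist}(U_k,\partial B)\ge 3.9\rho_k$, and the $U_k$ accumulate only at $Q_0$.
\end{enumerate}

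Next we show that $\Psi$ is globally Lipschitz with constant $2\varepsilon N$. Take $x,y$. If both lie in the same $\overline{U_k}$ or outside all $U_k$, the segment estimate gives $|\Psi(x)-\Psi(y)|\le\varepsilon N|x-y|$, since $\psi_k$ is smooth on all of $\mathbb{R}^n$. If $x\in U_k$ and $y\in U_j$ with $j\ne k$, or $y\notin U_j$ for every $j$, then $\psi_k$ vanishes at the point $z\in\partial U_k$ on the segment $[x,y]$, and $\psi_j$ vanishes at the analogous point $z'$. Hence
\[
|\Psi(x)-\Psi(y)|\le|\psi_k(x)-\psi_k(z)|+|\psi_j(z')-\psi_j(y)|\le 2\varepsilon N|x-y|.
\]
It follows that
\[
(1-2\varepsilon N)|x-y|\le|\Phi(x)-\Phi(y)|\le(1+2\varepsilon N)|x-y|,
\]
using $|\Phi(x)-\Phi(y)|\ge|x-y|-|\Psi(x)-\Psi(y)|$. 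Since $\varepsilon<1/(4N)$, the lower constant is positive.

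Surjectivity is the main obstacle in (i). The lower bi-Lipschitz bound gives injectivity and closed range. For surjectivity I would use the map $x\mapsto y-\Psi(x)e_1$, which is a contraction with constant $2\varepsilon N<1/2$. Banach's fixed point theorem then gives, for each $y$, a solution of $\Phi(x)=y$. The inverse is automatically Lipschitz with constant $(1-2\varepsilon N)^{-1}$, so $\Phi$ is a bi-Lipschitz homeomorphism.

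Item (ii) is immediate from the support property, and it gives $\Phi=\operatorname{Id}$ off $B$, since each $U_k\subset B$. For (iii), note $\mathbf{0}\notin U_k$ because $|c_k|>1/2$, so $\Phi(\mathbf{0})=\mathbf{0}$. Also $\Phi$ is the identity on $\mathbb{R}^n\setminus B$ and is a bijection of $\mathbb{R}^n$. Therefore $\Phi$ maps $B$ onto the complement of $\Phi(\mathbb{R}^n\setminus B)=\mathbb{R}^n\setminus B$, which is $B$.

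For (iv), every $x\in B$ has a neighborhood meeting at most one $U_k$, because the $U_k$ accumulate only at $Q_0\notin B$. On that neighborhood $\Phi=x+\psi_k e_1$ or $\Phi=x$, both of which are smooth. For (v), we have $D\Phi=I+e_1\nabla\Psi^T$, so $\det D\Phi=1+\partial_1\Psi\ge 1-\varepsilon N>0$ by the rank-one determinant formula. For (vi), if $x\in E_k$ then $(x-c_k)/\rho_k\in B(\mathbf{0},1/40)$, where $\chi\equiv1$. There $\eta=x_1$, so $\psi_k(x)=\varepsilon(x_1-(c_k)_1)$, which gives $\nabla\Psi=\varepsilon e_1$ and $D\Phi=\operatorname{diag}(1+\varepsilon,1,\ldots,1)$.
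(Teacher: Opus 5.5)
Your proposal is correct and follows the paper's proof essentially step for step. It uses the global Lipschitz bound $2\varepsilon N$ for $\Psi$, Banach's fixed point theorem applied to $x\mapsto y-\Psi(x)e_1$ for surjectivity, local finiteness of the $U_k$ inside $B$ for smoothness, $\det D\Phi=1+\partial_1\Psi$ for (v), and $\chi\equiv 1$ on $B(\mathbf{0},1/40)$ for (vi). Your differences are cosmetic: you explicitly check $U_k\subset B$ and that the $U_k$ are pairwise disjoint, justify the constant $2\varepsilon N$ by a segment argument, and get $\Phi(B)=B$ directly from bijectivity rather than by the paper's contradiction argument.
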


\begin{proof}
We first show (i). Let $L:=\|\, |\nabla \eta|\,\|_{L^{\infty}(\mathbb{R}^n)}$.
Observe that, for any $k\in\mathbb{N}$ and $x\in\mathbb{R}^n,$
$$\nabla \psi_k(x)=\varepsilon\nabla \eta\left(\frac{x-c_k}{\rho_k}\right).$$
This further implies that, for any $k\in\mathbb{N},$ $\psi_k$ is a Lipschitz function
on $\mathbb{R}^n$ with Lipschitz constant at most $\varepsilon L$. Using this
and the definition of $\Psi$, we conclude that $\Psi$ is also a Lipschitz function
on $\mathbb{R}^n$ with Lipschitz constant at most $2\varepsilon L.$

Let $y\in\mathbb{R}^n.$ For any $x\in\mathbb{R}^n,$  let
$$T_y(x):=y-\Psi(x)e_1.$$
Then the Lipschitz constant of $T_y$ is at most $2L\varepsilon\in (0,1)$.
Thus, the map $T_y:\ \mathbb{R}^n\to\mathbb{R}^n$ is compressed.
By the Banach fixed-point theorem (see, for instance, \cite[Theorem 9.23]{r76}),
we find that there exists a unique $x\in\mathbb{R}^n$ such that $T_y(x)=x,$ which implies that
$$\Phi(x)=x+\Psi(x)e_1=y.$$
Thus, $\Phi$ is surjective. Moreover, since
$\varepsilon\in (0,\frac{1}{4L})$, it follows that, for any $x,y\in\mathbb{R}^n,$
\begin{align*}
|\Phi(x)-\Phi(y)|\ge |x-y|-|\Psi(x)-\Psi(y)|\ge (1-2L\varepsilon)|x-y|
\end{align*}
and
\begin{align*}
|\Phi(x)-\Phi(y)|\le |x-y|+|\Psi(x)-\Psi(y)|\le (1+2L\varepsilon)|x-y|.
\end{align*}
Thus, $\Phi$ is injective and bi-Lipschitz. This proves (i).

Moreover, from the assumption that, for any $k\in\mathbb{N}$, 
$\mathrm{supp}\,(\psi_k)\subset U_k$,
we infer that (ii) holds.

Furthermore, by (ii) and the assumption that $\mathbf{0}\notin \bigcup_{k\in\mathbb{N}} U_k$,
we have $\Phi(\mathbf{0})=\mathbf{0}$. Let $x\in B.$ Assume that $\Phi(x)\not\in B$.
Then, from (ii), we deduce that  $\Phi(\Phi(x))=\Phi(x).$ On the other hand, by (i),
we conclude that $\Phi$ is bijective on $\mathbb{R}^n$. Thus, $\Phi(x)=x\in B.$
This contradicts  the assumption  $\Phi(x)\not\in B.$
Therefore, for any $x\in B$, $\Phi(x)\in B$. This shows $\Phi(B)\subset B.$
Using this and the facts that $\Phi$ is a bijection of $\mathbb{R}^n$ and is identity
on $\mathbb{R}^n\setminus B$, we find that $\Phi(B)=B.$ This proves (iii).

Next, we show (iv). Let $K\subset B$ be a compact set. Then $\mathrm{dist}\, (K,\partial B)\in (0,\infty).$
Note that, $\lim_{k\to\infty}\rho_k=0$ and $\lim_{k\to\infty}c_k=e_n$. Then there exists
$N\in\mathbb{N}$ such that, for any $k\in\mathbb{N}\cap [N,\infty)$, $K\cap U_k=\emptyset.$
From this and the assumption that $\psi_k\in C^\infty_{\mathrm{c}}(U_k)$ 
for each $k\in\mathbb{N}$,
it follows that $\Psi\in C^\infty(K).$ By the arbitrariness of $K$, we conclude that
$\Psi\in C^\infty(B).$ Thus, $\Phi\in C^\infty(B).$ This proves (iv).

Let $x:=(x_1,\ldots,x_n)\in B.$ Then
$$\Phi(x)=(x_1+\Psi(x),x_2,\ldots,x_n)$$
and $\det D\Phi(x)=1+\partial_1 \Psi(x).$ Meanwhile, we have
$\|\partial_1 \Psi\|_{L^\infty(B)}<\varepsilon L<\frac{1}{4}.$
Thus, $\det D\Phi(x)\in (0,\infty),$ which implies that (v) holds.

Finally, we prove (vi). From the definitions of $\eta$ and $\Psi$, we infer that,
for any $k\in\mathbb{N}$ and $x\in E_k$,
\begin{align*}
\Psi(x)=\psi_k(x)=\varepsilon x_1
\end{align*}
and
\begin{align*}
D\Phi(x)=\mathrm{diag}\,(1+\partial_1 \Psi(x),1,\ldots,1)
=\mathrm{diag}\,(1+\varepsilon,1,\ldots,1).
\end{align*}
This shows (vi), and hence finishes the proof of Proposition \ref{lem:Phi}.
\end{proof}

\subsection{Proof of Theorem \ref{thm:main-counterexample}}\label{2.3}
In this subsection, we give the proof of Theorem \ref{thm:main-counterexample} by
using Propositions \ref{lem:pullback} and \ref{lem:Phi}.

\begin{proof}[Proof of Theorem \ref{thm:main-counterexample}]
Let $\Phi$ be the same as in Lemma \ref{lem:Phi} and
\begin{align*}
\varepsilon\in\left(0,\min\left\{\frac{1}{4\|\,|\nabla\eta|\,\|_{L^\infty(\mathbb{R}^n)}},1\right\}\right].
\end{align*}
For any $x\in B$, let $M_x:=D\Phi(x)$, $J_\Phi:=\det D\Phi(x),$ and
\begin{align*}
A(x):=A_\Phi(x):=J_\Phi M_x^{-1}M_x^{-T}=
\det D\Phi(x)\,D\Phi(x)^{-1}D\Phi(x)^{-T}.
\end{align*}
Denote the divergence form elliptic operator associated with $A$ by $L_1:=\operatorname{div}(A\nabla \cdot).$
Then (i) follows directly from Proposition \ref{lem:pullback}(iii).

Next, we prove (ii). We first show that there exists a positive constant $c_n$,
depending only on $n$, such that, for any $x\in B$,
\begin{align}\label{huaizhong}
|A(x)-I|\leq c_n\|\,|\nabla\eta|\,\|_{L^\infty(\mathbb{R}^n)}\varepsilon.
\end{align}
By the definition of $\Phi$, we find that
\begin{align*}
\left|M_x-I\right|=|D\Phi(x)-I|=|\nabla\Psi(x)|
\leq\|\,|\nabla\eta|\,\|_{L^\infty(\mathbb{R}^n)}\varepsilon.
\end{align*}
Moreover, from (ii) and (iv) of Lemma \ref{lem:matrix-bounds}, we deduce that
$M_x$ is invertible and
\begin{align}\label{ying2}
\max\left\{\left|M_x^{-1}\right|, \left|M_x^{-T}\right|\right\}\leq\left(1-2\|\,|\nabla\eta|\,
\|_{L^\infty(\mathbb{R}^n)}\varepsilon\right)^{-1}\leq 2,
\end{align}
which further implies that
\begin{align}\label{e2.5}
\left|M_x^{-1}-I\right|=\left|M_x^{-1}(M_x-I)\right|\leq\left|M_x^{-1}\right|\,|M_x-I|
\leq2\|\,|\nabla\eta|\,\|_{L^\infty(\mathbb{R}^n)}\varepsilon
\end{align}
and
\begin{align}\label{e2.6}
\left|M_x^{-T}-I\right|=\left|(M_x^{-1}-I)^T\right|
=\left|M_x^{-1}-I\right|\leq2\|\,|\nabla\eta|\,\|_{L^\infty(\mathbb{R}^n)}\varepsilon.
\end{align}
Then, by \eqref{ying2}, \eqref{e2.5}, and \eqref{e2.6}, we conclude that
\begin{align}\label{ying1}
\left|M_x^{-1}M_x^{-T}-I\right|\leq\left|M_x^{-1}-I\right|\,\left|M_x^{-T}\right|
+\left|M_x^{-T}-I\right|\leq6\|\,|\nabla\eta|\,\|_{L^\infty(\mathbb{R}^n)}\varepsilon.
\end{align}
Furthermore, from Lemma \ref{lem:matrix-bounds}(v) and the fact that
$\det M_x\in(0,\infty)$ (see Proposition \ref{lem:Phi}(v)), we infer that
\begin{align*}
\left|J_\Phi(x)-1\right|&\leq
\max\left\{1-\left(1-2\|\,|\nabla\eta|\,\|_{L^\infty(\mathbb{R}^n)}\varepsilon\right)^n,
\left(1+2\|\,|\nabla\eta|\,\|_{L^\infty(\mathbb{R}^n)}\varepsilon\right)^n+1\right\}\\
&\leq
\left(1+2\|\,|\nabla\eta|\,\|_{L^\infty(\mathbb{R}^n)}\varepsilon\right)^n-\left(1-2\|\,
|\nabla\eta|\,\|_{L^\infty(\mathbb{R}^n)}\varepsilon\right)^n\\
&\leq4 n\left(\frac{3}{2}\right)^{n-1}\|\,|\nabla\eta|\,
\|_{L^\infty(\mathbb{R}^n)}\varepsilon.
\end{align*}
This, combined with \eqref{ying1} and \eqref{ying2}, implies that
\begin{align*}
|A(x)-I|&\leq\left|J_\Phi(x)M_x^{-1}M_x^{-T}-M_x^{-1}M_x^{-T}\right|
+\left|M_x^{-1}M_x^{-T}-I\right|\\
&\leq\left|J_\Phi(x)-1\right|\,\left|M_x^{-1}\right|\,\left|M_x^{-T}\right|
+\left|M_x^{-1}M_x^{-T}-I\right|\\
&\leq\left(\frac{16n3^{n-1}}{2^{n-1}}+6\right)\|\,|\nabla\eta|
\,\|_{L^\infty(\mathbb{R}^n)}\varepsilon.
\end{align*}
This proves \eqref{huaizhong}.

Moreover, recall that, for any $x\in B,$
\begin{align*}
a(x):=\sup_{y\in B(x,\delta(x)/2)} |A(y)-I|.
\end{align*}
By Proposition \ref{lem:Phi}(ii), we find that, for any $x\in B\setminus (\bigcup_{k=1}^{\infty}U_k),$
\begin{align}\label{gj}
D\Phi(x)=I\ \ \text{and}\ \ A(x)=I.
\end{align}
For any $k\in\mathbb{N}$, let
\begin{align*}
V_k:=\left\{x\in B: B\left(x,\delta(x)/2\right)\cap U_k\neq\emptyset\right\}
\ \ \text{and}\ \ V:=\bigcup_{k\in\mathbb{N}}V_k.
\end{align*}
Then, from the definition of $V_k$ and \eqref{gj}, we deduce that, for any
$x\in V^\complement,$ $a(x)=0.$ By this and \eqref{huaizhong}, we further conclude
that, for any $x\in B,$
\begin{align}\label{wuqiu}
a(x)\lesssim\varepsilon\mathbf{1}_{V}\leq\varepsilon
\sum_{k\in\mathbb{N}}\mathbf{1}_{V_k}.
\end{align}

Next, we show that, for any $k\in\mathbb{N},$
\begin{align}\label{zhengzhongji}
V_k\subset \left\{x\in B: \frac{13}{5}\rho_k<\delta(x)< \frac{41}{5}\rho_k
\text{ and }|x-Q_0|\leq \frac{41}{5}\rho_k\right\}.
\end{align}
Let $k\in\mathbb{N}$ and $x\in V_k$.
Then there exists $z\in U_k$ such that $z\in B(x,\frac{\delta(x)}{2})$, which
implies that
\begin{align*}
|\delta(x)-\delta(z)|\le |x-z|<\frac{\delta(x)}{2},
\end{align*}
and hence
\begin{align}\label{snk}
\frac{2}{3}\delta(z)<\delta(x)<2\delta(z).
\end{align}
On the other hand, since $z\in U_k$ and $\delta(c_k)=4\rho_k$, it follows that
\begin{align*}
|\delta(z)-4\rho_k|=|\delta(z)-\delta(c_k)|\leq|z-c_k|<\frac{\rho_k}{10}.
\end{align*}
Thus, we have
$$\frac{39}{10}\rho_k<\delta(z)<\frac{41}{10}\rho_k.$$
Using this and \eqref{snk}, we find that
\begin{align}\label{banxiaoshi}
\frac{13}{5}\rho_k<\frac{2}{3}\delta(z)<\delta(x)<2\delta(z)<\frac{41}{5}\rho_k.
\end{align}
Moreover, note that
\begin{align*}
|x-Q_0|\leq|x-z|+|z-c_k|+|c_k-Q_0|<\frac{\delta(x)}{2}+\frac{\rho_k}{10}+4\rho_k
<\frac{41}{5}\rho_k.
\end{align*}
This, together with \eqref{banxiaoshi}, further implies that \eqref{zhengzhongji}
holds.

Now, we prove \eqref{zhugou}.
Let $r\in (0,1)$ and $Q\in \partial B$. Recall that
\begin{align}\label{tiankong}
[h(r,Q)]^2=\frac{1}{\sigma(\Delta(Q,r))}\int_{T(Q,r)}\frac{[a(x)]^2}{\delta(x)}\,dx.
\end{align}
By \eqref{wuqiu}, we conclude that
\begin{align}\label{lige}
\int_{T(Q,r)}\frac{[a(x)]^2}{\delta(x)}\,dx&\lesssim\varepsilon^2
\sum_{k\in\mathbb{N}}\int_{T(Q,r)\cap V_k}\frac{dx}{\delta(x)}.
\end{align}
Let $k\in\mathbb{N}$ satisfy $T(Q,r)\cap V_k\neq \emptyset.$ Then there exists a
point $x_k\in T(Q,r)\cap V_k$. Since $x_k\in T(Q,r)=B(Q,r)\cap B$ and $Q\in B$,
we infer that
\begin{align*}
\delta(x_k)=\operatorname{dist\,}(x_k,\partial B)\le |x_k-Q|<r.
\end{align*}
This, combined with \eqref{zhengzhongji} and the definition of $\rho_k$, implies that
\begin{align}\label{nimeng}
100^{-k}=\rho_k
<
\frac{5}{13}\delta(x_k)
<\frac{5}{13}r.
\end{align}
Let $k_0$ be the smallest integer  such that  \eqref{nimeng} holds.
Then, from this, \eqref{lige}, and \eqref{zhengzhongji}, we deduce that
\begin{align*}
\int_{T(Q,r)}\frac{[a(x)]^2}{\delta(x)}\,dx&\lesssim\varepsilon^2\sum_{k=k_0}^\infty
\int_{T(Q,r)\cap V_k}\frac{dx}{\delta(x)}\leq\varepsilon^2\sum_{k=k_0}^\infty
\int_{V_k}\frac{dx}{\delta(x)}\\
&\lesssim\varepsilon^2\sum_{k=k_0}^\infty\rho_k^{n-1}\sim\varepsilon^2\rho_{k_0}^{n-1}
\lesssim\varepsilon^2r^{n-1}
\sim \varepsilon^2\sigma(\Delta(Q,r)).
\end{align*}
Using this, \eqref{lige}, and \eqref{tiankong}, we then find that
\begin{align*}
\sup_{r\in(0,1)}\sup_{Q\in\partial B}h(r,Q)\lesssim\varepsilon,
\end{align*}
which proves (ii).

Finally, we show (iii). We first claim that, for any $k\in\mathbb{N},$
\begin{align}\label{aiyi}
h(5\rho_k,Q_0)\gtrsim\varepsilon.
\end{align}
If this claim holds, then
\begin{align*}
\varlimsup_{r\to0^+}\sup_{Q\in\partial B}h(r,Q)\geq\varlimsup_{k\to\infty}h(5\rho_k,Q_0)
\gtrsim\varepsilon,
\end{align*}
which proves \eqref{xiajie}, and hence that (iii) holds. Now, we show
\eqref{aiyi}. Let $k\in\mathbb{N}.$
By the triangle inequality, we conclude that, for any  $x\in E_k=B(c_k,\frac{\rho_k}{40})$,
\begin{align}\label{shinian}
\delta(x)\le \delta(c_k)+|x-c_k|<4\rho_k+\frac{\rho_k}{40}=\frac{161}{40}\rho_k
\end{align}
and
\begin{align*}
|x-Q_0|\le |x-c_k|+|c_k-Q_0|<\frac{\rho_k}{40}+4\rho_k<5\rho_k.
\end{align*}
Therefore,
\begin{align*}
E_k\subset B(Q_0,5\rho_k)\cap B=T(Q_0,5\rho_k).
\end{align*}
From this and \eqref{shinian}, it follows that
\begin{align}\label{shinian2}
[h(5\rho_k,Q_0)]^2&=\frac{1}{\sigma(\Delta(Q_0,5\rho_k))}
\int_{T(Q_0,5\rho_k)}\frac{[a(x)]^2}{\delta(x)}\,dx\notag\\
&\gtrsim\frac{1}{\rho_k^{n}}\int_{E_k}[a(x)]^2\,dx.
\end{align}
Let $x\in E_k.$ By Lemma \ref{lem:Phi}(vi), we have
\begin{align*}
D\Phi(x)=\operatorname{diag}(1+\varepsilon,1,\ldots,1).
\end{align*}
Thus,
\begin{align*}
\det D\Phi(x)=1+\varepsilon
\ \ \text{and}\ \
D\Phi(x)^{-1}=\operatorname{diag}\left((1+\varepsilon)^{-1},1,\ldots,1\right),
\end{align*}
which implies that
\begin{align*}
A(x)=\operatorname{diag}\left((1+\varepsilon)^{-1},1+\varepsilon,\ldots,1+\varepsilon\right).
\end{align*}
Using this and the assumption that $\varepsilon\in (0,1]$, we conclude that
\begin{align*}
a(x)=\sup_{y\in B(x,\delta(x)/2)} |A(y)-I|\geq|A(x)-I|\geq
1-(1+\varepsilon)^{-1}=\frac{\varepsilon}{1+\varepsilon}\ge\frac{\varepsilon}{2}.
\end{align*}
From this and \eqref{shinian2}, we infer that
\begin{align*}
[h(5\rho_k,Q_0)]^2\gtrsim\frac{1}{\rho_k^{n}}\int_{E_k}[a(x)]^2\,dx
\gtrsim\varepsilon^2\frac{|E_k|}{\rho_k^{n}}\sim\varepsilon^2,
\end{align*}
which proves \eqref{aiyi}, and hence finishes the proof of Theorem
\ref{thm:main-counterexample}.
\end{proof}

%
%
%

\bigskip

\noindent
Xiaosheng Lin

\smallskip

\noindent
School of Mathematical Sciences, Jimei University,
Xiamen 361005, The People's Republic of China

\smallskip

\noindent {\it E-mail}: \texttt{xslin@jmu.edu.cn}

\bigskip

\noindent Dachun Yang (Corresponding author), Wen Yuan and Yangyang Zhang

\smallskip

\noindent Laboratory of Mathematics and Complex Systems
(Ministry of Education of China),
School of Mathematical Sciences, Institute for Advanced Study,
Beijing Normal University,
Beijing 100875, The People's Republic of China

\smallskip

\noindent{\it E-mails:} \texttt{dcyang@bnu.edu.cn} (D. Yang)

\noindent\phantom{{\it E-mails:}} \texttt{wenyuan@bnu.edu.cn} (W. Yuan)

\noindent\phantom{{\it E-mails:}} \texttt{yangyzhang@bnu.edu.cn} (Y. Zhang)

\bigskip

\noindent Sibei Yang

\medskip

\noindent School of Mathematics and Statistics, Lanzhou University, Lanzhou 730000, The People's Republic of China

\smallskip

\noindent{\it E-mail:} \texttt{yangsb@lzu.edu.cn}


\begin{thebibliography}{99}

\bibitem{ABR2001}
S. Axler, P. Bourdon and W. Ramey, Harmonic Function Theory, Second edition,
Graduate Texts in Mathematics 137, Springer-Verlag, New York, 2001.

\vspace{-0.3cm}

\bibitem{ahmmt20}J. Azzam, S. Hofmann, J. M. Martell, M. Mourgoglou
and X. Tolsa, Harmonic measure and quantitative connectivity: geometric
characterization of the $L^p$-solvability of the Dirichlet problem,
Invent. Math. 222 (2020), 881--993.

\vspace{-0.3cm}

\bibitem{amt17}J. Azzam, M. Mourgoglou and X. Tolsa, The one-phase problem
for harmonic measure in two-sided NTA domains, Anal. PDE 10 (2017), 559--588.

\vspace{-0.3cm}

\bibitem{b11}
H. Brezis, Functional Analysis, Sobolev Spaces and Partial Differential Equations,
Universitext, Springer, New York, 2011.

\vspace{-0.3cm}

\bibitem{btz23}
S. Bortz, T. Toro and Z. Zhao, Optimal Poisson kernel regularity for elliptic
operators with H\"older continuous coefficients in vanishing chord-arc domains,
J. Funct. Anal. 285 (2023), Paper No. 110025, 64 pp.

\vspace{-0.3cm}

\bibitem{btz23a}
S. Bortz, T. Toro and Z. Zhao, Elliptic measures for Dahlberg--Kenig--Pipher operators:
asymptotically optimal estimates, Math. Ann. 385 (2023), 881--919.

\vspace{-0.3cm}

\bibitem{cfk81}
L. Caffarelli, E. Fabes and C. E. Kenig, Completely singular elliptic-harmonic measures,
Indiana Univ. Math. J. 30 (1981), 917--924.

\vspace{-0.3cm}

\bibitem{d77} B. E. Dahlberg, Estimates of harmonic measure,
Arch. Rational Mech. Anal.
65 (1977), 275--288.

\vspace{-0.3cm}

\bibitem{d86} B. E. Dahlberg, On the absolute continuity of elliptic measure,
Amer. J. Math. 108 (1986), 1119--1138.

\vspace{-0.3cm}

\bibitem{d69} W. F. Donoghue, Distributions and Fourier Transforms, Pure and Applied Mathematics 32,
Academic Press, New York, 1969.

\vspace{-0.3cm}

\bibitem{Escauriaza1996}L. Escauriaza,
The $L^p$ Dirichlet problem for small perturbations of the Laplacian,
Israel J. Math. 94 (1996), 353--366.

\vspace{-0.3cm}

\bibitem{fkp91}
R. Fefferman, C. E. Kenig and J. Pipher, The theory of weights and the Dirichlet problem
for elliptic equations, Ann. of Math. (2) 134 (1991), 65--124.

\vspace{-0.3cm}

\bibitem{folland1995}G. B. Folland, Real Analysis: Modern Techniques and Their Applications, 2nd ed.,
Pure and Applied Mathematics, John Wiley \& Sons, Inc., New York, 1999.

\vspace{-0.3cm}

\bibitem{jk82}D. Jerison and C. E. Kenig, The logarithm of the Poisson kernel of a $C^1$
domain has vanishing mean oscillation, Trans. Amer. Math. Soc. 273 (1982), 781--794.
\vspace{-0.3cm}

\bibitem{jn61} F. John and L. Nirenberg, On functions of bounded mean oscillation,
Comm. Pure Appl. Math. 14 (1961), 415--426.

\vspace{-0.3cm}

\bibitem{Kenig1994}
C. E. Kenig, Harmonic Analysis Techniques for Second Order Elliptic Boundary Value Problems,
CBMS Regional Conf. Ser. in Math. 83,
the American Mathematical Society, Providence, RI, 1994.

\vspace{-0.3cm}

\bibitem{kt03}
C. E. Kenig and T. Toro, Poisson kernel characterization of Reifenberg flat chord arc domains,
Ann. Sci. \'Ecole Norm. Sup. (4) 36 (2003), 323--401.

\vspace{-0.3cm}

\bibitem{kt99}
C. E. Kenig and T. Toro, Free boundary regularity for harmonic measures and Poisson kernels,
Ann. of Math. (2) 150 (1999), 369--454.

\vspace{-0.3cm}

\bibitem{kt97} C. E. Kenig and T. Toro, Harmonic measure on locally flat domains,
Duke Math. J. 87 (1997), 509--551.

\vspace{-0.3cm}

\bibitem{mpt14} E. Milakis, J. Pipher and T. Toro,
Perturbations of Elliptic Operators in Chord Arc Domains,
in: Harmonic Analysis and Partial Differential Equations,
Contemp. Math. 612, pp.~143--161, American Mathematical Society,
Providence, RI, 2014.


\vspace{-0.3cm}

\bibitem{r76} W. Rudin, Principles of Mathematical Analysis, Third edition, Internat. Ser. Pure Appl.
Math., McGraw-Hill Book Co., New York--Auckland--D\"usseldorf, 1976.

\vspace{-0.3cm}

\bibitem{s75}
D. Sarason, Functions of vanishing mean oscillation,
Trans. Amer. Math. Soc. 207 (1975), 391--405.

\vspace{-0.3cm}

\bibitem{tt24} X. Tolsa and T. Toro, The two-phase problem for harmonic measure in
VMO and the chord-arc condition, Trans. Amer. Math. Soc. Ser. B 11 (2024), 1294--1315.

\vspace{-0.3cm}

\bibitem{t2010}
T. Toro, Potential analysis meets geometric measure theory,
in: Proceedings of the International Congress of Mathematicians, Volume III,
pp. 1485--1497,								
Hindustan Book Agency, New Delhi, 2010.


\end{thebibliography}
\end{document}